\tikzset{mylabel/.style={font=\footnotesize}}
\tikzset{mymidlabel/.style={fill=white}}
\definecolor{mydark}{RGB}{73,68,62}
\definecolor{mymedium}{RGB}{128,129,135}
\definecolor{mylight}{RGB}{226,226,226}
\definecolor{myyellow}{RGB}{251,214,0}
\definecolor{mydarkyellow}{RGB}{255,204,13}
\def\R{\mathbb{R}}
\def\Sbb{\mathbb{S}}
\def\e{\mathrm{e}}
\def\calF{\mathcal{F}}
\theoremstyle{plain}
\newtheorem{thm}{Theorem}[section]
\newtheorem{lem}{Lemma}[section]
\newtheorem*{thm*}{Theorem}
\newtheorem*{lem*}{Lemma}
\theoremstyle{remark}
\definecolor{gray80}{gray}{0.80}
\definecolor{gray90}{gray}{0.90}
\definecolor{gray95}{gray}{0.95}
\begin{document}

\title[]{Evaluating High Order Discontinuous Galerkin 
Discretization of the Boltzmann Collision Integral in $\mathcal{O}(N^2)$ Operations Using the Discrete Fourier Transform}

\author[A.\ Alekseenko]{Alexander~Alekseenko}
\address{Department of Mathematics, California State University Nortrhidge, Nortrhdige, CA 91330, USA}
\email{alexander.alekseenko@csun.edu}
\thanks{The first author was supported by the NSF DMS-1620497}

\author[J.\ Limbacher]{Jeffrey Limbacher}
\email{jeffrey.limbacher.248@my.csun.edu}
\thanks{The second author was supported by the NSF DMS-1620497}

\thanks{Computer resources were provided by the Extreme Science and Engineering Discovery Environment, 
supported by National Science Foundation Grant No.~OCI-1053575.}

\keywords{Boltzmann kinetic equation, discontinuous Galerkin discretization in velocity variable, dynamics of non-continuum gas}

\subjclass[2000]{76P05,76M10,65M60}

\begin{abstract}
We present a numerical algorithm for evaluating the Boltzmann collision operator
with $O(N^2)$ operations based on high order discontinuous Galerkin 
discretizations in 
the velocity variable. To formulate the approach, Galerkin projection 
of the collision operator is written in the form of a bilinear 
circular convolution. An application of the discrete Fourier transform allows to 
rewrite
the six fold convolution sum as a three fold weighted convolution sum in the 
frequency space. The new algorithm is implemented and tested in the 
spatially homogeneous case, and results in a considerable improvement 
in speed as compared to the direct evaluation. Simultaneous and separate 
evaluations of the gain and loss terms of the collision operator were 
considered. Less numerical error was observed in the conserved quantities  with  
simultaneous evaluation. 
\end{abstract}

\maketitle

\section{Introduction}
\label{secN1}

It has been accepted for some time that using global Fourier basis functions in 
velocity discretizations of the Boltzmann equation is essential in order to achieve $O(N^2)$ 
evaluation of the collision integral, where $N$ is the total number of discretization points. 
Exponentials have the factorization 
property that can be used to transform the gain term of the collision operator 
into forms suitable for efficient computation 
\cite{PareschiPerthame1996,KirschRjasanov2007,GambaTharkabhushanam2010}.
Another essential attribute of an efficient numerical 
formulation of the  collision operator consists in re-writing it  
in the form of a convolution \cite{BOBYLEV1999,BOBYLEVRjasanov1997,
MouhotPareschi2006,FilbetMouhotPareschi2006,FilbetMouhot2011,HuYing2012,HuLiPareschi2015}. 
A bilinear convolution form \cite{AlekseenkoNguyenWood2015}  follows for the Galerkin projection of the collision operator by exploring translational invariance of the collision operator \cite{GHP17_2133}. We argue in this paper that this convolution 
form leads to development of efficient discretizations of the collision operator
using structured locally supported bases. 
We present a numerical approach that is based on high 
order nodal discontinuous Galerkin (DG) discretizations of the Boltzmann equation in the velocity variable \cite{AlekseenkoJosyula2012a}
and that requires $O(N^2)$ operations to evaluate the collision operator.

Deterministic solution of the Boltzmann equation has a rich history. Readers interested in recent developments are 
directed to the review articles \cite{dimarco_pareschi_2014,NarayanKlochner}. A review of earlier results can be found in \cite{Aristov2001}.   
The difficulty in solving the Boltzmann equation is the evaluation of the five-fold collision integral describing interactions
of the gas molecules. Methods have been proposed to solve the Boltzmann equation using the direct 
discretization of the collision operator in the velocity variable (see, e.g., 
\cite{AristovZabelok2002,PanferovHeintz2002,Babovsky2008}). 
Computational costs of the direct methods grow very rapidly, usually, at least as $O(n^8)$, where $n$ is the 
number of velocity points in one velocity dimension. As a result, these methods can only 
be applied to problems that do not require a large number of spatial discretization points.

A number of approaches to solve the Boltzmann equation were obtained by applying spectral discretizations 
in the velocity variable. In \cite{PareschiPerthame1996} a Galerkin discretization was constructed using the Fourier 
basis functions. By exploring properties of exponentials, the authors formulated an approach 
for evaluation 
of the collision operator with $O(k^6)$ operations, where $k$ is the number of Fourier basis functions 
used in each velocity dimension. A closely related approach based on an application of the Fourier transform to the collision integral 
can be found in \cite{KirschRjasanov2007,GambaTharkabhushanam2009,GambaTharkabhushanam2010}. The method has computational 
complexity $O(k^6)$ and, similarly to Galerkin spectral methods, is derived using properties 
of exponentials. In \cite{MouhotPareschi2006,FilbetMouhotPareschi2006,FilbetMouhot2011} Carleman
representation of the collision operator was used to derive an approach based on Fourier-Galerkin 
discretization of the kinetic solution that uses $O(m k^3\log k)$ operations. Here $m$ is related to 
the number of angular directions in discretizations of the collision integral using spherical coordinates. An $O(m k^3\log k)$ algorithm was recently proposed for Fourier transform based 
formulations in \cite{GambaHaakHauckHu2017}. Fast spectral methods were applied to multidimensional solution of the Boltzmann equation \cite{Wu201327}, flows of gas mixtures \cite{Wu2015602}, and flows of gas with internal 
energies \cite{Munafo2014152}. A drawback of the methods is lack of adaptivity in the 
velocity space since methods use global Fourier basis. 

Alternative approaches to Fourier spectral discretization of the Boltzmann equation have been pursued 
as well. In \cite{FonnGrohsHipmair2015} a hyperbolic cross approximation of the solution in 
the frequency space was proposed introducing adaptivity in spectral methods. However, 
incomplete spectral representations are hard to combine with the use of fast Fourier 
transforms which may reduce the method's speed. A polynomial spectral discretization was  
proposed in \cite{GHP17_2133} 
and applied to solution of two dimensional super sonic flows. While the approach offers more 
compact approximations of the kinetic solution than the Fourier spectral approach, its  
algebraic complexity is higher due to the form of the collision operator used in the discrete 
algorithm. 

Approaches based on DG discretizations of the Boltzmann 
equation in the velocity variable were proposed in
\cite{Majorana2011,AlekseenkoJosyula2012,AlekseenkoJosyula2012a,GambaZhang2014}. 
High order DG bases are well suited for approximating discontinuous and 
high gradient solutions. In this paper, we present an algorithm for computing 
the collision operator in DG velocity formulations in $O(n^6)$ operations. 

Our approach is based on re-writing the discretized collision operator in the form of a discrete
convolution. It has been noted in \cite{GHP17_2133} that the collision operator 
satisfies translational 
invariance. In \cite{AlekseenkoJosyula2012,AlekseenkoJosyula2012a,GambaZhang2014} a closely 
related translational invariance of the Galerkin projection of the collision operator was 
used to reduce the storage requirements
of pre-computed collision kernels. In \cite{AlekseenkoNguyenWood2015}, the translational invariance 
was used to introduce a bilinear convolution 
form of the Galerkin projection of the collision operator. 
We will show that, in the case of uniform grids, this convolution form 
allows to re-write the collision 
operator as a convolution of multidimensional sequences. In fact, this is exactly the form of 
the collision operator that was used in \cite{AlekseenkoJosyula2012,AlekseenkoJosyula2012a} with 
the convolution being computed directly in $O(n^8)$ arithmetic operations. In this paper, the 
discrete convolution is evaluated using the discrete Fourier transform in 
only $O(n^6)$ operations. We note that the discrete Fourier transform can be replaced with a 
suitable number theoretical transform \cite{Nussbaumer1982}. Thus, one 
could, in principle, avoid using 
complex exponentials altogether. However using the discrete Fourier transform 
is convenient. The presented approach is easy to parallelize to a large number of  
processors. Evaluation of collision operator can be done in a fraction of time compared to 
the original DG velocity method \cite{AlekseenkoJosyula2012,AlekseenkoJosyula2012a}. 
Generalizations of the integral convolution form to octree meshes can be proposed. 
Difficulties arise, however,
with extending discrete convolution to non-uniform grids. In the case of 
piece-wise constant DG approximations, the new method has very similar properties to the 
Fourier-Galerkin approaches \cite{KirschRjasanov2007,GambaTharkabhushanam2010,PareschiPerthame1996}. 
One advantage of the new method is related to the use of high order DG approximations. 
Sizes of discrete velocity meshes that one can use with Fourier spectral discretizations 
are limited due to the memory requirements to compute Fourier transforms of the six 
dimensional collision kernels. 
The size of the discrete convolution in the new method is determined by the number of uniform 
velocity cells. High order nodal-DG bases can be used to approximate the solution 
accurately using $s^3$ nodal points/basis functions inside each velocity cell while 
keeping the number of cells relatively small. 
However, this introduces the total of $s^9$ convolutions that need to be evaluated. Thus, there is a practical limitation to the highest order of the DG approximation that one could employ in simulations. However, cases of $s\le 5$ are 
practically conceivable since the computations are easy to parallelize. 

In this paper, we focus on formulating the method and on establishing a 
comparison to the original approach of \cite{AlekseenkoJosyula2012,AlekseenkoJosyula2012a}. Application of the method to solution of 
spatially inhomogeneous Boltzmann equation will be the subject of the author's future work. The paper 
is organized as follows. Section~\ref{secN2} is dedicated to preliminaries on the Boltzmann equation 
and the nodal-DG 
discretizations. In Section~\ref{secN3}, the discrete convolution form of the collision operator is 
introduced and the discrete Fourier transform is used to rewrite the collision operator in the form
suitable for computation in $O(n^6)$ operations. The computational algorithms is formulated in Section~\ref{secN4}. 
In Section~\ref{secN5} we present results of numerical evaluations and comparison to the original
method of \cite{AlekseenkoJosyula2012a}. We 
compare numerical properties of the collision operator in the cases when gain and loss terms are 
computed simultaneously and separately. 

\section{The Nodal-DG Velocity Discretization}
\label{secN2}

\subsection{The Boltzmann equation}
\label{secN21}
In kinetic approach the gas is described using the molecular velocity distribution function
$f(t,\vec{x},\vec{v})$ which is defined by the following property: $f(t,\vec{x},\vec{v})d\vec{x}\,d\vec{v}$ 
gives the number of molecules that are contained in the box with the volume $d\vec{x}$ around point $\vec{x}$
whose velocities are contained in a box of volume $d\vec{v}$ around point $\vec{v}$.
Here by $d\vec{x}$ and $d\vec{v}$ we denote the volume elements $dx\, dy\, dz$ and $du\, dv\, dw$, correspondingly.
Evolution of the molecular distribution function is governed by the 
Boltzmann equation, which, in the case of one component 
atomic gas, has the form
\begin{equation}
\label{boltzm1}
\frac{\partial}{\partial t} f(t,\vec{x},\vec{v}) + \vec{v}\cdot \nabla_{x} f(t,\vec{x},\vec{v}) = I[f](t,\vec{x},\vec{v}).
\end{equation}
Here $I[f](t,\vec{x},\vec{v})$ is the molecular collision operator. In many instances, it is sufficient to only consider 
binary collisions between molecules. In this case the collision operator takes the 
form
\begin{equation}
\label{boltzm2}
I[f](t,\vec{x},\vec{v}) = \int_{\R^3}\int_{\Sbb^2}
(f(t,\vec{x},\vec{v}')f(t,\vec{x},\vec{v}'_{1})-
f(t,\vec{x},\vec{v})f(t,\vec{x},\vec{v}_{1}))B(|g|,\cos\theta) \, d\sigma\, d \vec{v}_{1},
\end{equation}
where $\vec{v}$ and $\vec{v}_{1}$ are the pre-collision velocities of a pair of molecules, $\vec{g}=\vec{v}-\vec{v}_{1}$, 
$\Sbb^2$ is a unit sphere in $\R^{3}$ centered at the origin, $\vec{w}$ is
the unit vector connecting the origin and a point on $\Sbb^2$, $\theta$ is the deflection 
angle defined by the equation $\cos\theta = \vec{w}\cdot\vec{g}/|g|$, $d\sigma=\sin\theta\, d\theta d\varepsilon$, where 
$\varepsilon$ is the azimuthal angle that parametrizes $\vec{w}$ together with the angle $\theta$. Vectors $\vec{v}'$ 
and $\vec{v}'_{1}$ are the post-collision velocities of a pair of particles and are computed by  
\begin{equation}
\label{eq2.1.3}
\vec{v}'=\vec{v}-\frac{1}{2}(\vec{g}-|g|\vec{w}), \qquad 
\vec{v}'_{1}=\vec{v}-\frac{1}{2}(\vec{g}+|g|\vec{w})\, .
\end{equation}
The kernel $B(|g|,\cos\theta)$ characterizes interactions of the molecules and is selected appropriately 
to reproduce the desired characteristics of the gas. Various forms of $B(|g|,\cos\theta)$ exist, see e.g., 
\cite{Boyd1991411,Cercignani2000,Kogan1969}. In the case of inverse 
$k$-th power forces between particles,
\begin{equation}
\label{eq2.1.4}
B(|g|,\cos\theta)=b_{\alpha}(\theta) |g|^{\alpha}, 
\end{equation}
where $\alpha=(k-5)/(k-1)$.  
The case $\alpha=0$ is known as Maxwellian gas and the case $\alpha=1$ as the hard spheres gas.
In this paper we consider kernels of the form (\ref{eq2.1.4}) with $0\le \alpha\le 1$, and with angular cut off, i.e., 
\begin{equation*}
\int_{0}^{\pi} b_{\alpha} (\theta) \sin \theta \, d\theta \, < \infty . 
\end{equation*}

\subsection{Discontinuous Galerkin velocity discretization}
\label{secN22}

The nodal-DG velocity discretization that will be employed in this paper was also used in
\cite{AlekseenkoJosyula2012,AlekseenkoJosyula2012a}. We select a rectangular parallelepiped in the 
velocity space that is sufficiently large so that contributions of the molecular distribution 
function to the first few moments outside of this parallelepiped are negligible. 
We partition this region into parallelepipeds $K_{j}$. 
Let $\vec{v}=(u,v,w)$ and let the numbers $s_u$, $s_{v}$, and $s_{w}$ determine the degrees of the polynomial 
basis functions in the velocity components $u$, $v$, and $w$, respectively. Let
$K_{j}=[u_{j}^{L},u_{j}^{R}]\times[v_{j}^{L},v_{j}^{R}]\times[w_{j}^{L},w_{j}^{R}]$.
The basis functions are constructed as follows. We introduce nodes of the Gauss
quadratures of orders $s_u$, $s_{v}$, and $s_{w}$ on each of the intervals $[u_{j}^{L},u_{j}^{R}]$,
$[v_{j}^{L},v_{j}^{R}]$, and $[w_{j}^{L},w_{j}^{R}]$, respectively.  Let these nodes be 
denoted 
$\kappa^{u}_{p;j}$, $p=1,\ldots,s_{u}$,
$\kappa^{v}_{q;j}$, $q=1,\ldots,s_{v}$, and 
$\kappa^{w}_{r;j}$, $r=1,\ldots,s_{w}$. 
We define one-dimensional Lagrange basis functions as follows, 
\begin{equation*}
\phi^{u}_{l;j}(u)=\prod_{{p=1,s^{u}} \atop {p\neq l}} \frac{\kappa^{u}_{p;j}-u}{\kappa^{u}_{p;j}-\kappa^{u}_{l;j}}\, ,\quad 
\phi^{v}_{m;j}(v)=\prod_{{q=1,s^{v}} \atop {q\neq m}} \frac{\kappa^{v}_{q;j}-v}{\kappa^{v}_{q;j}-\kappa^{v}_{m;j}}\, ,\quad 
\phi^{w}_{n;j}(w)=\prod_{{r=1,s^{w}} \atop {r\neq n}} \frac{\kappa^{w}_{r;j}-w}{\kappa^{w}_{r;j}-\kappa^{w}_{n;j}}\, .
\end{equation*}
The three-dimensional basis functions are given by
\begin{equation}
\label{eq01a}
\phi_{i;j}(\vec{v})=\phi^{u}_{l;j}(u)\phi^{v}_{m;j}(v)
\phi^{w}_{n;j}(w)\, ,
\end{equation} 
where
$i=1,\ldots,s:=s_{u}s_{v}s_{w}$ is the index running through all 
combinations of $l$, $n$, and $m$. In the implementation discussed in this paper, 
$i$ is computed using the formula $i=(l-1)s_{v}s_{w}+(m-1)s_{w}+n$. 
\begin{lem} (see also \cite{AlekseenkoJosyula2012a, HesthavenWarburtoin2007})
The following identities hold for basis functions $\phi_{i;j}(\vec{v})$:
\begin{equation}
\label{eq1.1} 
\int_{K_{j}} \phi_{p;j}(\vec{v})\phi_{q;j}(\vec{v})\, d\vec{v} = \frac{\omega_{p}\Delta\vec{v}^{j}}{8}\delta_{pq}
\qquad\mbox{and} \qquad
\int_{K_{j}} \vec{v}\phi_{p;j}(\vec{v})\phi_{q;j}(\vec{v})\, d\vec{v} 
= \frac{\omega_{p}\Delta\vec{v}^{j}}{8}\vec{v}_{p;j}\delta_{pq}\, ,
\end{equation}
where indices $p$ and $q$ run over all combinations of $l$, $n$, and $m$ in three dimensional basis functions 
$\phi_{p;j}(\vec{v})=
\phi^{u}_{l;j}(u)\phi^{v}_{m;j}(v)\phi^{w}_{n;j}(w)$ 
and the vectors $\vec{v}_{p;j}=(\kappa^{u}_{l;j},\kappa^{v}_{m;j},\kappa^{w}_{n;j})$. Also, $\Delta\vec{v}^{j}= (u_{j}^{R}-u_{j}^{L})(v_{j}^{R}-v_{j}^{L}) (w_{j}^{R}-w_{j}^{L})$, and 
$\omega_{i}:=\omega^{s_{u}}_{l} \omega^{s_{v}}_{m} \omega^{s_{w}}_{n}$, where 
$\omega^{s_{u}}_{l}$,  $\omega^{s_{v}}_{m}$, and $\omega^{s_{w}}_{n}$ are the weights
of the Gauss quadratures of orders $s_u$, $s_v$, and $s_w$, respectively. 
\end{lem}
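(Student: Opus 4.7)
The plan is to reduce both identities to the exactness of one-dimensional Gauss quadrature combined with the nodal (Kronecker delta) property of the Lagrange basis, and then recover the three-dimensional statement from the tensor-product structure (\ref{eq01a}).

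First, I would recall that the Gauss quadrature on $s_u$ nodes on an interval of length $h$ integrates polynomials of degree at most $2s_u-1$ exactly, and that by construction $\phi^{u}_{l;j}(\kappa^{u}_{k;j})=\delta_{lk}$, with analogous identities for the $v$ and $w$ directions. For the first identity, write $p$ and $q$ as multi-indices $(l,m,n)$ and $(l',m',n')$, so that the tensor product (\ref{eq01a}) factors the integral over $K_j$ into a product of three one-dimensional integrals. Each factor, e.g.\ $\int_{u_{j}^{L}}^{u_{j}^{R}} \phi^{u}_{l;j}(u)\phi^{u}_{l';j}(u)\, du$, is the integral of a polynomial of degree $2(s_u-1)\le 2s_u-1$, so the Gauss quadrature is exact and yields $\frac{u_{j}^{R}-u_{j}^{L}}{2}\sum_{k=1}^{s_u}\omega^{s_u}_{k}\delta_{lk}\delta_{l'k}=\frac{u_{j}^{R}-u_{j}^{L}}{2}\omega^{s_u}_{l}\delta_{ll'}$. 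Multiplying the three such factors produces $\frac{\Delta\vec{v}^{j}}{8}\omega_{p}\delta_{pq}$, which is the first identity.

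For the second identity, the new ingredient is the factor $\vec{v}=(u,v,w)$ in the integrand. Because $\vec{v}$ is vector-valued I would treat the three components of the identity separately; by the same tensor factorization, each component reduces to one modified one-dimensional integral. For instance, the $u$-component produces the factor $\int_{u_{j}^{L}}^{u_{j}^{R}} u\,\phi^{u}_{l;j}(u)\phi^{u}_{l';j}(u)\, du$, whose integrand has degree $2s_u-1$ and is therefore still exactly integrable by the $s_u$-point Gauss rule. Applying the rule and the nodal property gives $\frac{u_{j}^{R}-u_{j}^{L}}{2}\omega^{s_u}_{l}\kappa^{u}_{l;j}\delta_{ll'}$, and multiplication by the unchanged $v$- and $w$-factors yields $\frac{\Delta\vec{v}^{j}}{8}\omega_{p}\kappa^{u}_{l;j}\delta_{pq}$. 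Repeating with $v$ and $w$ in the integrand produces the other two components of $\vec{v}_{p;j}\delta_{pq}$, assembling into the claimed vector identity.

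There is no real obstacle beyond bookkeeping: the crucial observation is the degree count showing that the factor $u$ (or $v$, $w$) does not push the integrand past degree $2s-1$, so Gauss exactness still applies; indexing carefully through the multi-index $p=(l,m,n)$ and the definition $\omega_{p}=\omega^{s_{u}}_{l}\omega^{s_{v}}_{m}\omega^{s_{w}}_{n}$ is the only potential source of confusion and would be handled by fixing notation once at the beginning of the argument.
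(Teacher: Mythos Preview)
Your proposal is correct and is the standard argument for this classical fact. Note that the paper itself does not give a proof of this lemma---it is stated without proof and the reader is referred to \cite{AlekseenkoJosyula2012a, HesthavenWarburtoin2007}---so there is nothing to compare against; your tensor-product reduction plus the degree count for Gauss exactness is exactly what one finds in those references.
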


\subsection{Nodal-DG velocity discretization of the Boltzmann equation}
\label{secN23}

We assume that on each $K_{j}$ the solution to the Boltzmann equation 
is sought in the form 
\begin{equation}
\label{eq2.3.1}
f(t,\vec{x},\vec{v})|_{K_{j}} = \sum_{i=1,s} f_{i;j}(t,\vec{x})\phi_{i;j}(\vec{v})\, .
\end{equation}
The DG velocity discretization that we shall use follows by substitution of the 
representation (\ref{eq2.3.1}) into (\ref{boltzm1}), multiplication 
of the result by a test basis function, and integration over $K_{j}$. Repeating this 
for all $K_{j}$ and using identities (\ref{eq1.1}) we arrive at
\begin{equation}
\label{discveloblzm}
\partial_{t} f_{i;j}(t,\vec{x}) + \vec{v}_{i;j}\cdot \nabla_{x} f_{i;j}(t,\vec{x}) =
\frac{8}{\omega_{i}\Delta\vec{v}^{j}}I_{\phi_{i;j}}\, ,
\end{equation}
where $I_{\phi_{i;j}}$ is the projection of the collision operator 
on the basis function $\phi_{i;j}(\vec{v})$:
\begin{equation}
\label{projcoll}
I_{\phi_{i;j}} = \int_{K_{j}}\phi_{i;j}(\vec{v}) I[f](t,\vec{x},\vec{v})\, d\vec{v}\, .
\end{equation} 

\subsection{Reformulation of the Galerkin projection of the collision operator}
\label{secN32}

Similarly to \cite{AlekseenkoJosyula2012,AlekseenkoJosyula2012a,Majorana2011}, we rewrite the 
DG projection of the collision operator $I_{\phi_{i;j}}$ in the form of a bilinear integral operator 
with a time-independent kernel. Specifically, using the well-known identities (see, e.g.,  \cite{Kogan1969}, Section 2.4), and applying the first principles assumption, we have 
\begin{align}
\label{eq3.2.1}
I_{\phi_{i;j}}&= \int_{\R^3}\int_{\R^3} f(t,\vec{x},\vec{v}) f(t,\vec{x},\vec{v}_{1})
\int_{\Sbb^2}(\phi_{i;j}(\vec{v}')-\phi_{i;j}(\vec{v})) b_{\alpha}(\theta) |g|^\alpha \, d\sigma\,
 d\vec{v}_{1}\, d\vec{v} \nonumber \\
{} & = \int_{\R^3}\int_{\R^3} f(t,\vec{x},\vec{v}) f(t,\vec{x},\vec{v}_{1})
 A(\vec{v},\vec{v}_{1};\phi_{i;j})   d\vec{v}_{1}\, d\vec{v}\, ,
\end{align}
where  
\begin{align}
\label{eq3.2.3}
A(\vec{v},\vec{v}_{1};\phi_{i;j})= |g|^\alpha \int_{\Sbb^2} (\phi_{i;j}(\vec{v}')
- \phi_{i;j}(\vec{v})) b_{\alpha}(\theta) \, d\sigma\, .
\end{align}
We notice that kernel $A(\vec{v},\vec{v}_{1};\phi_{i;j})$ is independent of time and can be pre-computed. 
In \cite{AlekseenkoJosyula2012a}, properties of a kernel closely related to $A(\vec{v},\vec{v}_{1};\phi_{i;j})$ 
are considered. In particular, due to the local support of $\phi_{i;j}(\vec{v})$, it is anticipated that 
kernel  $A(\vec{v},\vec{v}_{1};\phi_{i;j})$ will have only $O(M^{5})$ non-zero components for each $\phi_{i;j}(\vec{v})$, 
where $M$ is the number of discrete velocity cells in each velocity dimension. As a result, evaluation of  
(\ref{eq3.2.1}) will require $O(M^{8})$ operations for each spatial point. This number of evaluations 
is very high. However, as we will show later, it can be reduced to $O(M^6)$ operations using symmetries of
$A(\vec{v},\vec{v}_{1};\phi_{i;j})$, the convolution form of (\ref{eq3.2.1}), and the discrete Fourier 
transform. 

We remark that in many numerical formulations of the Boltzmann equation, the collision 
operator is separated into the gain and loss terms. This separation can also be performed 
in (\ref{eq3.2.1}) in the integral over the collision sphere. After the separation, 
the definition of the operator $A(\vec{v},\vec{v}_{1};\phi_{i;j})$ loses the portion $-|g|^\alpha \int_{\Sbb^2} \phi_{i;j}(\vec{v})d \sigma=:-|g|^\alpha \sigma_{T}$ that, in turn, gives rise to the classical collision frequency 
$\nu(t,\vec{x},\vec{v}):= \int_{\R^3} f(t,\vec{x},\vec{v}_{1}) \sigma_{T}|g|^{\alpha} \, d \vec{v}_{1}$ and the loss term. Theoretical properties of the split formulation 
are very similar to that of (\ref{eq3.2.1}), (\ref{eq3.2.3}). 
Moreover, it may be argued that the 
split formulation is better suited for an application of the Fourier transform 
than (\ref{eq3.2.1}), (\ref{eq3.2.3}) because the kernel of the split formulation 
is decreasing at infinity while kernel $A(\vec{v},\vec{v}_{1};\phi_{i;j})$ 
defined by (\ref{eq3.2.3}) is increasing at infinity in some directions. Also, the 
numerical algorithms introduced in this paper can be extended to the split formulation as well.
However, it was observed that simulations using gain and loss splitting exhibit much 
stronger violation of conservation laws than formulation (\ref{eq3.2.1}), (\ref{eq3.2.3}). 
The exact mechanism of why the non-split formulation preserves the conservation 
laws better is still not clear to the authors. Some insight 
can be obtained by noticing that values of the 
collision kernel  $A(\vec{v},\vec{v}_{1};\phi_{i;j})$ span several orders of magnitude 
and that small values of $A(\vec{v},\vec{v}_{1};\phi_{i;j})$ occur in
sufficiently many points so that they are important collectively. It is possible that 
when small and large values are combined during the evaluation of the gain term, the 
accuracy of the small values is lost or essentially diminished. When the loss term is subtracted from the 
gain term, cancellation occurs producing large errors. On the contrary, 
conservation 
laws are satisfied point-wise in the form (\ref{eq3.2.1}), 
(\ref{eq3.2.3}) up to a small number of algebraic manipulations with the basis functions 
$\phi_{i;j}(\vec{v})$. Because of these considerations, we chose to 
use the non-split form of the collision operator in simulations. 

\subsection{Shift invariance property of kernel $A(\vec{v},\vec{v}_{1};\phi_{i;j})$}
\label{secN33}
\begin{lem}
\label{lem1} 
Let operator $A(\vec{v},\vec{v}_{1};\phi_{i;j})$ be defined by (\ref{eq3.2.3}). Then $\forall\xi\in \R^3$
\begin{equation*}
A(\vec{v}+\vec{\xi},\vec{v}_{1}+\vec{\xi};\phi_{i;j}(\vec{v}-\vec{\xi}))=
A(\vec{v},\vec{v}_{1};\phi_{i;j}) \, .
\end{equation*}
\end{lem}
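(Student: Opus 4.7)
The proof plan is essentially a direct substitution argument that exploits the fact that the collision dynamics depend only on the relative velocity $\vec{g}=\vec{v}-\vec{v}_1$, which is manifestly translation invariant. First I would unfold the right--hand side of the definition~(\ref{eq3.2.3}) with $(\vec{v},\vec{v}_1)$ replaced by $(\vec{v}+\vec{\xi},\vec{v}_1+\vec{\xi})$ and with $\phi_{i;j}$ replaced by its shift $\phi_{i;j}(\,\cdot-\vec{\xi})$, and then show that each piece of the integrand reduces to the unshifted expression.

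The key observations are: (i) $\vec{g}$ is unchanged under the simultaneous translation, so $|g|^{\alpha}$ and the polar angle $\theta=\arccos(\vec{w}\cdot\vec{g}/|g|)$ are both invariant, hence $b_{\alpha}(\theta)\,d\sigma$ is unaffected; and (ii) using the post--collision formula~(\ref{eq2.1.3}), the shifted post--collision velocity is
\begin{equation*}
(\vec{v}+\vec{\xi})-\tfrac{1}{2}(\vec{g}-|g|\vec{w})=\vec{v}'+\vec{\xi}.
\end{equation*}
Therefore, when the shifted basis function $\phi_{i;j}(\,\cdot-\vec{\xi})$ is evaluated at the shifted pre-- and post--collision velocities, the translation by $\vec{\xi}$ cancels exactly, giving $\phi_{i;j}(\vec{v}')$ and $\phi_{i;j}(\vec{v})$ respectively.

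Assembling these observations, the integrand of the shifted kernel coincides pointwise (in the $\vec{w}$ variable) with that of $A(\vec{v},\vec{v}_1;\phi_{i;j})$, and the equality follows after integration over $\Sbb^{2}$. There is no real obstacle here; the one point that requires a little care is keeping the two roles of $\vec{\xi}$ straight, namely the translation of the velocity arguments of $A$ on the one hand and the counter--translation of the argument of $\phi_{i;j}$ on the other. A brief remark can be added that no property of $\phi_{i;j}$ other than pointwise evaluation is used, so the identity holds for an arbitrary test function in place of $\phi_{i;j}$, which matches the spirit of the translation invariance of the collision operator recalled in Section~\ref{secN32}.
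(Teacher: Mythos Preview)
Your argument is correct and mirrors the paper's own proof: both proceed by direct substitution, using that $\vec{g}=\vec{v}-\vec{v}_1$ is unchanged under a common shift (so $|g|^{\alpha}$, $\theta$, and $d\sigma$ are invariant), that the post--collision velocity of the shifted pair is $\vec{v}'+\vec{\xi}$ by~(\ref{eq2.1.3}), and that evaluating the shifted test function $\phi_{i;j}(\,\cdot-\vec{\xi})$ at $\vec{v}+\vec{\xi}$ and $\vec{v}'+\vec{\xi}$ cancels the $\vec{\xi}$. If anything, your write-up is slightly more careful in that you treat both the $\phi_{i;j}(\vec{v}')$ and the $\phi_{i;j}(\vec{v})$ terms explicitly, whereas the paper's displayed computation writes out only the gain part.
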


\proof
Consider $A(\vec{v}+\vec{\xi},\vec{v}_{1}+\vec{\xi};\phi_{i;j}(\vec{v}-\vec{\xi}))$. We clarify that these notations mean that
particle velocities $\vec{v}$ and $\vec{v}_{1}$ in (\ref{eq3.2.3}) are replaced with $\vec{v}+\vec{\xi}$ and $\vec{v}_{1}+\vec{\xi}$
correspondingly and that basis function $\phi_{i;j}(\vec{v})$ is replaced with a ``shifted'' function 
$\phi_{i;j}(\vec{v}-\vec{\xi})$. We notice that the relative speed of the molecules with velocities $\vec{v}+\vec{\xi}$ 
and $\vec{v}_{1}+\vec{\xi}$ is still $\vec{g}=\vec{v}+\vec{\xi}-(\vec{v}_{1}+\vec{\xi}_{1})=\vec{v}-\vec{v}_{1}$. 
The post-collision velocities for the pair of particles will be $\vec{v}'+\vec{\xi}$ and $\vec{v}'_{1}+\vec{\xi}$, where $\vec{v}'$ and $\vec{v}'_{1}$ are given by (\ref{eq2.1.3}). We notice, in particular, that choices of $\theta$ and $\varepsilon$ in 
(\ref{eq2.1.3}) are not affected by $\vec{\xi}$. The rest of the statement follows by a direct substitution:
\begin{align*}
A(\vec{v}&+\vec{\xi},\vec{v}_{1}+\vec{\xi};\phi_{i;j}(\vec{v}-\vec{\xi})) = |g|^{\alpha} \int_{\Sbb^2} \phi_{i;j}((\vec{v}'+\vec{\xi})-\vec{\xi}) b_{\alpha}(\theta)\, d\sigma\,= |g|^{\alpha} \int_{\Sbb^2} \phi_{i;j}(\vec{v}') b_{\alpha}(\theta)\, d\sigma \\
&  = A(\vec{v},\vec{v}_{1};\phi_{i;j})\, . 
\end{align*}
\endproof

We remark that Lemma~{\ref{lem1}} holds for all potentials of molecular 
interaction used in rarefied gas dynamics. This property was used in 
\cite{AlekseenkoJosyula2012a,GambaZhang2014}  to reduce the storage requirement 
for $A(\vec{v},\vec{v}_{1};\phi_{i;j})$
on uniform partitions. 

\subsection{Re-writing the collision operator in the form of a convolution}
\label{secN34}
We will assume that the domain in the velocity space is a rectangular parallelepiped 
and that partition cells are uniform and that the same basis functions are used on 
each cell. In \cite{AlekseenkoNguyenWood2015} it was shown that, in this case, 
the Galerkin projection of the collision operator can be naturally 
re-formulated as a convolution. For convenience, we recall the reasoning here. 

We select a partition cell $K_c$ and designate this cell as the generating cell. Similarly, 
the basis functions $\phi_{i;c}(\vec{v})$ on $K_c$ are designated as the generating basis
functions. Basis functions $\phi_{i;j}(\vec{v})$ on other cells can be obtained using 
a shift in the velocity variable, namely $\phi_{i;j} (\vec{v})=
\phi_{i;c} (\vec{v}+\vec{\xi_{j}})$ where $ \vec{\xi}_{j} \in \mathbb{R}^3$ is the vector 
that connects the center of $K_j$ to the center of $K_c$.

According to  Lemma~{\ref{lem1}}, operator $A(\vec{v},\vec{v}_1,\phi_{i;j})$ is invariant with respect to translations. Therefore
\begin{align}
\label{eq3.1}
I_{\phi_{i;j}} &= \int_{\R^3}\int_{\R^3} f(t,\vec{x},\vec{v})f(t,\vec{x},\vec{v}_{1})A(\vec{v}+\vec{\xi_{j}},\vec{v}_1+\vec{\xi_{j}}; \phi_{i;j}(\vec{u} - \vec{\xi_{j}})) \, d\vec{v}_{1} d\vec{v} \notag\\
 &= \int_{\R^3}\int_{\R^3} f(t,\vec{x},\vec{v})f(t,\vec{x},\vec{v}_{1})A(\vec{v}+\vec{\xi_{j}},\vec{v}_{1}+\vec{\xi_{j}}; \phi_{i;c}(\vec{u})) \, d\vec{v} d\vec{v_{1}} \, .
\end{align}
Performing the substitutions $\vec{\hat{v}} = \vec{v}+\vec{\xi_{j}} $ and $ \vec{\hat{v}}_1 = \vec{v}_{1}+\vec{\xi_{j}}$ in (\ref{eq3.1}), we have
\begin{equation*}
I_{\phi_{i;j}} = \int_{\R^3}\int_{\R^3} f(t,\vec{x},\vec{\hat{v}}-\vec{\xi_{j}}) f(t,\vec{x},\vec{\hat{v}}_1-\vec{\xi_{j}}) A(\vec{\hat{v}},\vec{\hat{v}}_{1};\phi_{i;c}(\vec{u})) \, d\vec{\hat{v}} d\vec{\hat{v}}_1 \, .
\end{equation*}
We then introduce a bilinear convolution operator, $i=1,\ldots,s$
\begin{equation}
\label{eq3.3}
I_i(\vec{\xi}) = \int_{\R^3}\int_{\R^3} f(t,\vec{x},\vec{v}-\vec{\xi})f(t,\vec{x},\vec{v}_{1}-\vec{\xi})A(\vec{v},\vec{v}_{1}; \phi_{i;c}) \, d\vec{v} d\vec{v}_{1} \, ,
\end{equation}
and notice that $I_{\phi_{i;j}}$ can be obtained from (\ref{eq3.3}) as
$I_{\phi_{i;j}}=I_i(\vec{\xi}_j)$. In the following, we will refer to (\ref{eq3.3}) 
as the convolution form of the Galerkin projection of the collision integral.

\section{Discretization of Collision Integral and Fast Evaluation of Discrete Convolution}
\label{secN3}

To evaluate the collision operator numerically, the three dimensional 
integrals in (\ref{eq3.3}) are replaced with the Gauss quadratures 
associated with the nodal-DG 
discretization (\ref{eq01a}). As is discussed above, we are only interested in 
computing convolution (\ref{eq3.3}) at vectors 
$\vec{\xi}=\vec{\xi}_{j}$ that connect centers of the velocity cells $K_j$ to the 
center of the velocity cell $K_c$, the support of $\phi_{i;c}(\vec{v})$. Since the 
same nodal points are used on all velocity cells, shifts $\vec{\xi}_{j}$ 
translate nodal points in one cell to nodal points 
in another cell. As a result, the quadrature sums to evaluate convolution (\ref{eq3.3})
use values of the unknown $f(t,\vec{x},\vec{v})$ at the nodal points
only. In fact, the shift in the velocity variable 
$\vec{v}_{i;l}-\vec{\xi}_{j}$ will correspond 
to a shift in the three dimensional index of the velocity cell 
which we will write formally as $l-j$, 
producing the velocity node $\vec{v}_{i;l-j}(\vec{v})$. The exact expression 
for the shift $l-j$ will be made clear later by considering the cell indices 
separately for 
each velocity dimension. The index $i$ of the node within the cell is not 
affected by the shift. 

We can write the discrete form of (\ref{eq3.3}) as 
\begin{equation}
\label{I+_dist}
I_{i;j}:=I_{i}(\vec{\xi}_{j}) = \sum_{i',i''=1}^s  \sum_{j'=1}^{M^3} \sum_{j''=1}^{M^3} f_{i';j'-j} f_{i'';j''-j} A_{i,i',i'';j',j''} 
\end{equation}
where $f_{i';j'-j}=f(t,\vec{x},\vec{v}_{i';j'-j})$, 
$A_{i,i',i'';j',j''}=A(\vec{v}_{i';j'},\vec{v}_{i'';j''}; \phi_{i;c})(\omega_{i'}\Delta \vec{v}/8)(\omega_{i''}\Delta \vec{v}/8)$ and the three dimensional 
indices $i'$ and $i''$ run over the velocity nodes within a single velocity cell 
and indices $j'$ and $j''$ run over all velocity cells. We note that some shifted 
indices $j'-j$ point outside of the velocity domain. 
In \cite{AlekseenkoJosyula2012a} values outside of the domain 
were substituted with zeros. In cases when the support of the solution 
was well contained within the computational domain, this assumption did not lead to 
large numerical errors. 

\subsection{Discrete convolution form of the collision integral}
We note that equation (\ref{I+_dist}) is a convolution of multi-indexed sequences. 
To make this convolution explicit, we separate the three dimensional indices 
$j=(j_u,j_v,j_w)$, $j'=(j_u',j_v',j_w')$, and $j''=(j_u'',j_v'',j_w'')$ into their directional components 
to obtain
\begin{equation}
\label{I+_dist1}
I_{i;j_{u},j_{v},j_{w}}= \sum_{i',i''=1}^s \sum_{j_u',j_v',j_w'=0}^{M-1} \sum_{j_u'',j_v'',j_w''=0}^{M-1} f_{i';j'_{u}-j_{u},j'_{v}-j_{v},j'_{w}-j_{w}} f_{i'';j''_{u}-j_{u},j''_{v}-j_{v},j''_{w}-j_{w}} A_{i,i',i'';j'_{u},j'_{v},j'_{w},j''_u,j''_{v},j''_{w}}\, ,
\end{equation}
where the components of the index shift $j=(j_u,j_v,j_w)$ are the integer numbers determining 
the shift vector $\vec{\xi}_{j}=(j_u \Delta u, 
j_v \Delta v, j_w \Delta w)$. Here  
$\Delta u$, $\Delta v$, and $\Delta w$ are the dimensions of the uniform velocity cells and $M$ 
is the number of velocity cells in each dimension. For simplicity, we assume that the 
same number of cells is used in each direction. However, the method can be 
formulated for arbitrary numbers of cells.  

Formula (\ref{I+_dist1}) suggests that $O(M^9)$ operations are required 
to compute the collision operator. However, the actual complexity 
of directly evaluating (\ref{I+_dist1}) at all points is $O(M^{8})$
due to the sparsity of $A_{i,i',i'';j',j''}$
\cite{AlekseenkoJosyula2012,AlekseenkoJosyula2012a}. We will show next, 
that an application of discrete Fourier transform allows to evaluate convolution 
(\ref{I+_dist1}) in $O(M^{6})$ operations. 

\subsection{Discrete Fourier transform, circular convolution, and periodic continuation}
Convolution of sequences can be computed efficiently using a fast Fourier transform.
For convenience, let us briefly recall the approach here. 
Let $x_{n}$ and $y_{n}$ be periodic sequences with period $N$. An $N$-point circular 
convolution of $x_{n}$ and $y_{n}$ is defined as (see, e.g., \cite{Nussbaumer1982})
\begin{equation}
\label{cirlconv}
z_{l}=\sum_{n=0}^{N-1} x_{n}y_{l-n}\, .
\end{equation} 
An approach for computing circular convolutions in $O(N\log N)$ 
operations follows from an application of the discrete Fourier transform to 
(\ref{cirlconv}). We recall that the discrete Fourier transform of an $N$-periodic 
sequence $x_{n}$ and its inverse are defined by
\begin{equation}
\label{mdft_1d}
\calF[x]_{k} = \sum_{n=0}^{N-1} W^{ k n} x_{n}, \qquad
x_{l} = \frac{1}{N} \sum_{k=0}^{N-1} W^{-lk} \calF[x]_{k}, \qquad  
\mbox{where}\quad 
W=\e^{-\imath 2\pi/N}\, .
\end{equation}
A well known property of the Fourier transform is that it converts circular 
convolution (\ref{cirlconv}) into a product, namely, 
\begin{equation*}
\calF[z]_{k}=\calF[x]_{k}\calF[y]_{k}\, .
\end{equation*}
Thus to evaluate (\ref{cirlconv}), $\calF[x]_{k}$ and $\calF[y]_{k}$ can be computed in $O(N\log N)$ 
operations using a fast Fourier transform. Then, $\calF[z]_{k}$ are  computed by multiplying 
$\calF[x]_{k}$ and $\calF[y]_{k}$ in $O(N)$ operations. Finally, the values of $z_{l}$ are obtained  
in another $O(N\log N)$ operations by computing the inverse Fourier transform of $\calF[z]_{k}$. 
Convolutions of non-periodic sequences of length $N$ are commonly evaluated using 
a reduction to circular convolutions. For that, sequences are padded with additional 
$N$ zeros to eliminate aliasing and extended to periodic sequences with period $2N$. Then 
a circular convolution of length $2N$ is computed to produce the desired result.

The above approach can also be applied to evaluation of (\ref{I+_dist1}). First, we convert 
(\ref{I+_dist1}) into a multidimensional circular convolution by periodically extending 
$f(t,\vec{x},\vec{v})$ in variable $\vec{v}$ outside of the velocity domain 
and by periodically extending $A(\vec{v},\vec{v}_{1}; \phi_{i;c})$ in both $\vec{v}$ 
and $\vec{v}_{1}$. Without loss of generality, 
we assume that the velocity domain is sufficiently large so that the supports of 
both $f(t,\vec{x},\vec{v})$ and $A(\vec{v},\vec{v}_{1}; \phi_{i;c})$ are limited to at most 
half of the domain's linear size in any direction. This eliminates aliasing when 
treating (\ref{I+_dist1}) as a multidimensional circular convolution. Indeed, this assumption 
does not introduce theoretical difficulties since both solution and the kernel can be  
padded by zeros to a larger region in the velocity space. However, this assumption  
introduces considerable practical difficulties, most notably the larger memory 
requirements for the Fourier image of $A(\vec{v},\vec{v}_{1}; \phi_{i;c})$. Effects of 
truncation and periodic extension of $f(t,\vec{x},\vec{v})$ on the properties of the 
collision operator were considered in \cite{PareschiRusso2000}. Much less is known, however, about the 
effects of truncation and extension of $A(\vec{v},\vec{v}_{1}; \phi_{i;c})$. It can be 
seen from (\ref{eq3.2.3}) that the kernel is growing linearly at the infinity in the 
direction of $\vec{v}-\vec{v}_{1}$ for at least some points $\vec{v}$. Nevertheless, 
a truncation of the kernel $A(\vec{v},\vec{v}_{1}; \phi_{i;c})$ 
was used in \cite{AlekseenkoJosyula2012a} in which entries of $A(\vec{v},\vec{v}_{1}; \phi_{i;c})$ 
were set equal to zero if $\|\vec{v}-\vec{v}_{1}\|<R$, for some selected $R$. 
Numerical simulations in \cite{AlekseenkoJosyula2012a} confirmed that the 
effect of the truncation is negligible if the support of the solution can be 
enclosed in a ball of diameter $R$. Numerical 
experiments conducted in this work also suggest that truncation and periodic extension 
of the solution and the kernel can be performed successfully and the direct 
convolution (\ref{I+_dist1}) can be treated as a circular convolution if the support 
of the solution is sufficiently small.

To obtain the desired formulas for efficient evaluation of the collision operator, we  
apply the discrete Fourier transform (DFT) in indices $(j_u,j_v,j_w)$ to (\ref{I+_dist1}) 
and rewrite the result in a suitable form. As is well documented in similar approaches
(see, e.g., \cite{GambaTharkabhushanam2009, FilbetMouhotPareschi2006, MouhotPareschi2006}), the resulting 
expression is also a convolution that is evaluated directly, but in significantly 
fewer operations. In calculations below we will use the following definition of the 
multidimensional DFT. Let 
$x_{k_1, \dots, k_d}$ be a sequence indexed by $k_1, \dots, k_d$ with equal 
lengths $N$ in each dimension. The DFT $\calF[x]_{k_1,\ldots,k_d}$ of 
$x_{n_1, \dots, n_d}$ is defined as 
\begin{equation}
\label{mdft_def}
\calF[x]_{k_1,\ldots,k_d} = \sum_{n_1=0}^{N-1} \left( W^{ k_1 n_1} 
\sum_{n_2=0}^{N-1} \left( W^{k_2 n_2} \dots 
\sum_{n_d=0}^{N-1} W^{k_d n_d } x_{n_1, \ldots, n_d}
\right) \right)\, .
\end{equation}
Also, it is useful to define the inverse of the transform, 
\begin{equation*}
x_{l_1,\ldots,l_d} = \frac{1}{N} \sum_{k_1=0}^{N-1} \left( W^{ -l_1 k_1} 
\frac{1}{N} \sum_{k_2=0}^{N-1} \left( W^{- l_2 k_{2}} \dots 
\frac{1}{N} \sum_{k_d=0}^{N-1} W^{-l_d k_d } \hat{x}_{k_1,\ldots,k_d} 
\right) \right)\, .
\end{equation*}
Similarly to one dimensional case, fast discrete Fourier transforms can be defined to 
evaluate the transform and its inverse in $O(N^d\log N)$ operations.

\subsection{Formulas for fast evaluation of the collision operator}
To derive the formula for computing the collision operator we rewrite (\ref{I+_dist1}) as
\begin{equation}
\label{I+_distq1}
I_{i;j_{u},j_{v},j_{w}}= \sum_{i',i''=1}^s I_{i,i',i'';j_{u},j_{v},j_{w}} \, ,
\end{equation}
where 
\begin{equation*}
I_{i,i',i'';j_{u},j_{v},j_{w}}=  \sum_{j_u',j_v',j_w'=0}^{M-1} \sum_{j_u'',j_v'',j_w''=0}^{M-1} f_{i';j'_{u}-j_{u},j'_{v}-j_{v},j'_{w}-j_{w}} f_{i'';j''_{u}-j_{u},j''_{v}-j_{v},j''_{w}-j_{w}} A_{i,i',i'';j'_{u},j'_{v},j'_{w},j''_u,j''_{v},j''_{w}}\, .
\end{equation*}
In view of (\ref{I+_distq1}), we can focus on evaluation of $I_{i,i',i'';j_{u},j_{v},j_{w}}$. 
To simplify the notations in the discussion below, we drop the $i$, $i'$, and $i''$ subscripts from 
$I_{i,i',i'';j_{u},j_{v},j_{w}}$, $f_{i';j'_{u},j'_{v},j'_{w}}$, 
$A_{i,i',i'';j'_{u},j'_{v},j'_{w},j''_u,j''_{v},j''_{w}}$, and  $\calF[I_{i,i',i''}]_{k_{u},k_{v},k_{w}}$ 
and write $I_{j_u,j_v,j_w}$, $f_{j'_{u},j'_{v},j'_{w}}$, 
$A_{j'_{u},j'_{v},j'_{w},j''_u,j''_{v},j''_{w}}$, and $\calF[I]_{k_{u},k_{v},k_{w}}$, respectively. 
In particular, we have
\begin{equation}
\label{I_all_index}
I_{j_u,j_v,j_w}=  \sum_{j_u',j_v',j_w'=0}^{M-1} \sum_{j_u'',j_v'',j_w''=0}^{M-1}f_{j'_{u}-j_{u},j'_{v}-j_{v},j'_{w}-j_{w}} f_{j''_{u}-j_{u},j''_{v}-j_{v},j''_{w}-j_{w}} A_{j'_{u},j'_{v},j'_{w},j''_u,j''_{v},j''_{w}}\, .
\end{equation}
As is seen from definition (\ref{mdft_def}), the multi-dimensional DFT results from applying 
the one-dimensional DFT along each dimension of the sequence for fixed values of indices 
in the other dimensions (see e.g., \cite{Nussbaumer1982}).

We fix indices $j_v$ and $j_w$  in equation (\ref{I_all_index}) and apply the one-dimensional DFT 
in the remaining index $j_u$. Using linearity of the DFT and reordering the sums, we have 
\begin{equation*}
\calF[I_{j_v,j_w}]_{k_u} = \sum_{j_v',j'_w=0}^{M-1} \sum_{j_v'',j''_{w}=0}^{M-1}  \calF[\tilde{I}_{j_v,j_v',j_v'',j_w,j_w',j_w''}]_{k_u}\, ,
\end{equation*}
where 
\begin{equation*}
\calF[\tilde{I}_{j_v,j_v',j_v'',j_w,j_w',j_w''}]_{k_u} = 
\sum_{j_u=0}^{M-1}  W^{ k_u j_u} \tilde{I}_{j_u;j_v,j_v',j_v'',j_w,j_w',j_w''} \, ,
\end{equation*}
\begin{equation*}
\tilde{I}_{j_u;j_v,j_v',j_v'',j_w,j_w',j_w''} = \sum_{j_u''=0}^{M-1} \sum_{j_u'=0}^{M-1} f_{j'_{u}-j_{u},j'_{v}-j_{v},j'_{w}-j_{w}} f_{j''_{u}-j_{u},j''_{v}-j_{v},j''_{w}-j_{w}} A_{j'_{u},j'_{v},j'_{w},j''_u,j''_{v},j''_{w}}\, .
\end{equation*}
Once again, for purposes of calculating the one-dimensional DFT along dimension $j_u$, we need only consider the transform of $\tilde{I}_{j_u,j_v,j_v',j_v'',j_w,j_w',j_w''}$. 
By similar argument as before, we fix and drop indices $j_v$, $j_v'$, $j_v''$, $j_w$, $j_w'$, and $j_w''$ in the latter formula and write 
\begin{align}
\tilde{I}_{j_u} &= \sum_{j_u''=0}^{M-1} \sum_{j_u'=0}^{M-1} f_{j'_{u}-j_{u}} f_{j''_{u}-j_{u},} A_{j'_{u},j''_u}\, , \label{I_onedim_u} \\
\calF[\tilde{I}]_{k_u} &= \sum_{j_u=0}^{M-1} \sum_{j_u''=0}^{M-1} \sum_{j_u'=0}^{M-1} W^{k_u j_u} f_{j'_{u}-j_{u}} f_{j''_{u}-j_{u},} A_{j'_{u},j''_u}\, . \label{FI_onedim_u}
\end{align}
We note that evaluating $\calF[\tilde{I}]_{k_u}$ directly would require $O(M^3)$ operations.
However, taking into consideration the discussion in the last section, expression in the right side of (\ref{I_onedim_u}) can be considered as a circular convolution, having a form similar to (\ref{cirlconv}). This motivates us to explore properties of the DFT and rewrite (\ref{FI_onedim_u}) in a form suitable for numerical computation. This is accomplished in the following lemma.

\begin{lem}
\label{lemma_dft}
Let $\{ f_j\}_{j=0}^{M-1}$ be a $M$ periodic sequence and $\{A_{ij}\}_{i,j=0}^{M-1}$ be a two index sequence that is $M$ periodic in both its indices. Let $\{\tilde{I}_j\}_{j=0}^{M-1}$ be a new sequence defined by 
\begin{equation}
\label{I_onedim}
\tilde{I}_{j} =  \sum_{j'=0}^{M-1} \sum_{j''=0}^{M-1} f_{j'-j}f_{j''-j} A_{j',j''}\, .
\end{equation}
Let $\calF[\tilde{I}]_k$ be the DFT of $\tilde{I}_j$, then  
\begin{equation}
\label{onedfconv}
\calF[\tilde{I}]_k = M \sum_{l=0}^{M-1} \calF^{-1}[f]_{k-l} \calF^{-1}[f]_{l} \calF[A]_{k-l,l}\, .
\end{equation}
\end{lem}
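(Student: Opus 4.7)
The plan is to work directly from the definition by rewriting $A_{j',j''}$ via the inverse two-dimensional DFT and $f$ by itself, substituting into (\ref{I_onedim}), using periodicity to change summation variables, and finally applying the forward DFT in the index $j$ together with the orthogonality relation $\sum_{j=0}^{M-1} W^{j(k-k_1-k_2)} = M\,\delta_{k,\,k_1+k_2 \bmod M}$ to collapse one of the two frequency sums.

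More concretely, I would first write
\[
A_{j',j''} = \frac{1}{M^{2}}\sum_{k_{1},k_{2}=0}^{M-1} W^{-j'k_{1} - j''k_{2}} \,\calF[A]_{k_{1},k_{2}}
\]
and substitute into (\ref{I_onedim}). Next, I would make the change of variables $m' = j'-j$, $m'' = j''-j$ in the inner double sum; this step is legal because of the assumed $M$-periodicity of $f$ and $A$, which keeps the range of summation unchanged. After factoring $W^{-j(k_{1}+k_{2})}$ out of the sum and separating the $m'$ and $m''$ sums, each one-dimensional sum $\sum_{m}W^{-mk_{i}}f_{m}$ is recognized as $M\,\calF^{-1}[f]_{k_{i}}$ using the definition of $\calF^{-1}$ from (\ref{mdft_1d}). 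This produces the intermediate identity
\[
\tilde{I}_{j} = \sum_{k_{1},k_{2}=0}^{M-1} W^{-j(k_{1}+k_{2})}\,\calF^{-1}[f]_{k_{1}}\,\calF^{-1}[f]_{k_{2}}\,\calF[A]_{k_{1},k_{2}},
\]
which exhibits $\tilde{I}_{j}$ directly as a (double) inverse DFT.

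Applying $\calF$ in $j$ then gives $\calF[\tilde{I}]_{k} = \sum_{k_{1},k_{2}} \calF^{-1}[f]_{k_{1}}\calF^{-1}[f]_{k_{2}}\calF[A]_{k_{1},k_{2}}\sum_{j}W^{j(k-k_{1}-k_{2})}$, and the inner sum evaluates to $M$ when $k_{2} \equiv k-k_{1}\pmod{M}$ and zero otherwise. Setting $l = k_{1}$ and using $k_{2}=k-l$ (mod $M$) yields
\[
\calF[\tilde{I}]_{k} = M \sum_{l=0}^{M-1} \calF^{-1}[f]_{l}\,\calF^{-1}[f]_{k-l}\,\calF[A]_{l,\,k-l},
\]
which, after the harmless relabeling $l \mapsto k-l$ (permitted by $M$-periodicity of all factors), matches (\ref{onedfconv}) exactly.

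There is no real obstacle here: the argument is a straightforward Fourier manipulation, and the only point requiring care is the bookkeeping of the periodicity assumption when shifting the summation indices $j' \to j'-j$ and $j'' \to j''-j$ and when invoking orthogonality modulo $M$. I would make sure to state explicitly that because $f$ and $A$ are $M$-periodic, every index appearing in $\calF^{-1}[f]$ and $\calF[A]$ should be interpreted modulo $M$, so that the sums on the right-hand side of (\ref{onedfconv}) are well defined.
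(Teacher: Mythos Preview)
Your argument is correct and is in fact cleaner than the paper's own proof. Both proofs follow the same overall strategy---insert a Fourier representation of $A$, shift the summation indices using periodicity, and recognize the resulting sums as $\calF^{-1}[f]$---but the executions differ. The paper expands $A_{j',j''}$ via the one-dimensional inverse DFT in the second index only, then performs an explicit split-and-reindex argument on the $j''$ sum, repeats the same manual manipulation on the $j$ sum, and only at the very end regroups terms into recognizable transforms. You instead expand $A$ by its full two-dimensional inverse DFT from the outset, handle both index shifts $j'\to j'-j$ and $j''\to j''-j$ simultaneously (invoking periodicity once rather than through repeated sum-splitting), and then use the orthogonality relation $\sum_{j}W^{j(k-k_1-k_2)}=M\delta_{k,\,k_1+k_2\bmod M}$ to collapse the extra frequency variable. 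Your route is shorter and more transparent; the paper's one-index-at-a-time treatment is more laborious but has the pedagogical advantage of mirroring exactly the induction step used in the subsequent Theorem~\ref{thm3.1}, where the same lemma is applied dimension by dimension to pass from one to three velocity components.
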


\begin{proof}
Applying the one-dimensional DFT to $\tilde{I}_j$, we have
\begin{equation}
\label{FI_u_index}
\calF[\tilde{I}]_k = \sum^{M-1}_{j_u=0} \sum^{M-1}_{j'=0} \sum^{M-1}_{j''=0} W^{k j } f_{j'-j}f_{j''-j}A_{j',j''}\, .
\end{equation}
We define $\calF[A_{j'}]_{l}$ to be the one-dimensional DFT of $A_{j',j''}$ in the second index, i.e.,
\begin{align*}
\calF[A_{j'}]_{l} = \sum_{j''=0}^{M-1} W^{ j'' l} A_{j',j''}\, ,
\end{align*}
and rewrite $A_{j',j''}$ as
\begin{align}
\label{one_dim_FA}
A_{j',j''}= \frac{1}{M} \sum_{l=0}^{M-1} W^{ -j'' l} \calF[A_{j'}]_{l}\, .
\end{align}
Substituting (\ref{one_dim_FA}) into (\ref{FI_u_index}), we have
\begin{align}
\label{original}
\calF [\tilde{I}]_k &= \frac{1}{M} \sum_{j=0}^{M-1} \sum_{j'=0}^{M-1} \sum_{j''=0}^{M-1} W^{jk} f_{j'-j}f_{j''-j}
\left( \sum_{l=0}^{N-1} W^{-j'' l} \calF[A_{j'}]_{l} \right)  \nonumber \\
&= \frac{1}{M} \sum_{l=0}^{M-1} \sum_{j=0}^{M-1} \sum_{j'=0}^{M-1} \sum_{j''=0}^{M-1} W^{jk} f_{j'-j}f_{j''-j} W^{-j'' l } \calF[A_{j'}]_{l} 
\end{align}
Consider the sum that runs over index $j''$. Assuming that indices $l$, $j$, and $j'$ are held constant, we split the sum into two parts.
\begin{align}
\label{j''_split}
\sum_{j''=0}^{M-1} &W^{jk} f_{j'-j}f_{j''-j} W^{-j'' l } \calF[A_{j'}]_{l}  \nonumber \\
=&\sum_{j''=0}^{j-1} W^{jk} f_{j'-j}f_{j''-j} W^{-j'' l } \calF[A_{j'}]_{l} + \sum_{j''=j}^{M-1} W^{jk} f_{j'-j}f_{j''-j} W^{-j'' l } \calF[A_{j'}]_{l} \, .
\end{align}

Notice $j''-j < 0$ in the first sum. Using periodicity of $f_{j}$, the summation index can be 
redefined so that only values $f_{j''-j}$ with positive $j''-j$ appear in the sum. Indeed, 
we assume that $j'' < j$ and notice that $f_{j''-j}=f_{j''-j+M}$ since $f_{j}$ is $M$ periodic. 
We also have $W^{M}=1$, so $W^{j'' l } = W^{j''l+Ml} = W^{(j''+M)l}$. 
Introducing $\hat{j}''=j''+M$, we observe 
\begin{align*}
\sum_{j''=0}^{j-1} W^{jk} f_{j'-j}f_{j''-j} W^{-j'' l } \calF[A_{j'}]_{l} &
=\sum_{j''=0}^{j-1} W^{jk} f_{j'-j}f_{j''-j+M} W^{(-j''+M) l } \calF[A_{j'}]_{l} \\
&=\sum_{\hat{j}''=M}^{M-1+j} W^{jk} f_{j'-j}f_{\hat{j}''-j} W^{-\hat{j}'' l } \calF[A_{j'}]_{l}\, . 
\end{align*}
Combining the last formula with (\ref{j''_split}) we have 
\begin{align}
\label{j''_split2}
\sum_{j''=0}^{M-1} W^{jk} f_{j'-j}f_{j''-j} W^{-j'' l } \calF[A_{j'}]_{l} 
&= \sum_{j''=j}^{M-1+j} W^{jk} f_{j'-j}f_{j''-j} W^{-j'' l } \calF[A_{j'}]_{l} \, .
\end{align}
Introducing a substitution of index $u''=j''-j$ we rewrite the right side of  (\ref{j''_split2}) as follows 
\begin{align*}
\sum_{j''=0}^{M-1} W^{jk} f_{j'-j}f_{j''-j} W^{-j'' l } \calF[A_{j'}]_{l} 
= \sum_{u''=0}^{M-1} W^{jk} f_{j'-j}f_{u''} W^{(-u''-j) l } \calF[A_{j'}]_{l}\, .
\end{align*}
Going back to (\ref{original}), we replace the inside sum with the last expression to have
\begin{align}
\label{original_u''}
\frac{1}{M} \sum_{l=0}^{M-1} &\sum_{j=0}^{M-1} \sum_{j'=0}^{M-1} \sum_{u''=0}^{M-1} W^{jk} f_{j'-j}f_{u''} W^{(-u''-j) l } \calF[A_{j'}]_{l} \nonumber \\
&= \frac{1}{M} \sum_{l=0}^{M-1} \sum_{j'=0}^{M-1} \sum_{u''=0}^{M-1} W^{-u'' l }f_{u''} \left( \sum_{j=0}^{M-1} W^{j(k-l)} f_{j'-j}  \calF[A_{j'}]_{l}  \right)\, .
\end{align}

Now we focus on the term within the parentheses in (\ref{original_u''}). Splitting the sum and using periodicity, we obtain
\begin{align}
\sum_{j=0}^{M-1}& W^{j(k-l)} f_{j'-j} \calF[A_{j'}]_{l} \\
&=\sum_{j=0}^{j'} W^{j(k-l)} f_{j'-j} \calF[A_{j'}]_{l} + \sum_{j=j'+1}^{M-1} W^{j(k-l)} f_{j'-j} \calF[A_{j'}]_{l}  \nonumber \\
&=\sum_{j=0}^{j'} W^{j(k-l)} f_{j'-j} \calF[A_{j'}]_{l} + \sum_{j=j'+1}^{M-1} W^{(j-M)(k-l)} f_{j'-j+M} \calF[A_{j'}]_{l}  \nonumber \\
&=\sum_{j=0}^{j'} W^{j(k-l)} f_{j'-j} \calF[A_{j'}]_{l} + \sum_{\hat{j}=j'-M+1}^{-1} W^{\hat{j}(k-l)} f_{j'-\hat{j}} \calF[A_{j'}]_{l}  \nonumber  \\
&=\sum_{j=j'-M+1}^{j'} W^{j(k-l)} f_{j'-j} \calF[A_{j'}]_{l} 
= \sum_{u'=0}^{M-1} W^{(j'-u')(k-l)} f_{u'} \calF[A_{j'}]_{l}\, .
\end{align}
Here $u'=j'-j$. 
Substituting this result into (\ref{original_u''}) and regrouping sums, we yield
\begin{align}
\label{grouped}
\frac{1}{M} \sum_{l=0}^{M-1} &\sum_{j'=0}^{M-1} \sum_{u''=0}^{M-1} W^{-u'' l }f_{u''} \left( \sum_{j=0}^{M-1} W^{j(k-l)} f_{j'-j}  \calF[A_{j'}]_{l}  \right) \nonumber \\
&= \frac{1}{M} \sum_{l=0}^{M-1} \sum_{j'=0}^{M-1} \sum_{u''=0}^{M-1} W^{-u'' l}f_{u''} \left(\sum_{u'=0}^{M-1} W^{(j'-u')(k-l)} f_{u'} \calF[A_{j'}]_{l} \right) \nonumber \\
&= M\sum_{l=0}^{M-1} \left( \frac{1}{M} \sum_{u'=0}^{M-1} W^{-u'(k-l)} f_{u'}  \right) \left( \frac{1}{M} \sum_{u''=0}^{M-1} W^{-u'' l} f_{u''} \right) \left(  \sum_{j'=0}^{M-1} W^{j'(k-l)} \calF[A_{j'}]_{l} \right).
\end{align}
The terms in the parentheses in (\ref{grouped}) are just the definitions of the DFT. Thus we can write the equation as
\begin{align*}
\calF [\tilde{I}]_k = M \sum_{l=0}^{M-1} \calF^{-1}[f]_{k-l} \calF^{-1}[f]_l \calF[A]_{k-l,l} \, .
\end{align*}
\end{proof}

Lemma~\ref{lemma_dft} allows us to compute (\ref{FI_onedim_u}) in $O(M^2)$ operations. Indeed, it takes 
$O(M \log M)$ operations to compute $\calF^{-1}[f]_{k_{u}}$ using a fast Fourier transform and it takes 
$O(M^2)$ operations to compute discrete convolution in the frequency space (\ref{onedfconv}). To extend this result to $\calF[I]_{k_{u},k_{v},k_{w}}$, it is sufficient to repeat the approach for indices $j_{v}$ and $j_{w}$ focusing on one dimension at a time. The following theorem summarizes the result. 

\begin{thm}
\label{thm3.1}
Let $f_{j_{u},j_{v},j_{w}}$ be a three-index sequence that is periodic in each index with period $M$ and let $A_{j'_{u},j'_{v},j'_{w},j''_{u},j''_{v},j''_{w}}$ be a $M$-periodic six-dimensional tensor. The multi-dimensional discrete Fourier transform of equation (\ref{I_all_index}) can be represented as
\begin{equation}
\label{FI_formula}
\calF [I]_{k_{u},k_{v},k_{w}} = M^3 \sum_{l_{u},l_{v},l_{w}=0}^{M-1} \calF^{-1}[f]_{k_{u}-l_{u},k_{v}-l_{v},k_{w}-l_{w}} \calF^{-1}[f]_{l_{u},l_{v},l_{w}} \calF[A]_{k_{u}-l_{u},k_{v}-l_{v},k_{w}-l_{w},l_{u},l_{w},l_{w}}
\end{equation}
\end{thm}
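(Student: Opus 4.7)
The plan is to reduce Theorem~\ref{thm3.1} to three successive applications of Lemma~\ref{lemma_dft}, one for each velocity dimension. The key observation is that, by definition (\ref{mdft_def}), the multidimensional DFT is just the iterated composition of one-dimensional DFTs, and DFTs in different indices commute. Hence it suffices to apply the one-dimensional DFT in the $j_u$ direction first (holding $j_v,j_w$ as parameters), then in the $j_v$ direction (holding $k_u,j_w$ as parameters), and finally in the $j_w$ direction.

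First, I would fix the indices $j_v,j_w,j_v',j_v'',j_w',j_w''$ in (\ref{I_all_index}) and isolate the inner double sum in $j_u',j_u''$. Reading off the $u$-dependence, this inner sum has exactly the form (\ref{I_onedim}) of Lemma~\ref{lemma_dft}, with the role of $f_j$ played by $f_{\,\cdot\,,j_v'-j_v,j_w'-j_w}$ and $f_{\,\cdot\,,j_v''-j_v,j_w''-j_w}$ (two copies, one for each $u$-shift), and with the role of $A_{j',j''}$ played by the sequence $A_{\,\cdot\,,j_v',j_w',\,\cdot\,,j_v'',j_w''}$ in its $u$-indices. Applying Lemma~\ref{lemma_dft} to the $j_u$-DFT of the full expression then produces a factor of $M$, a single sum over $l_u$, and replaces the $u$-index structure by $\calF^{-1}[f]_{k_u-l_u,\,\cdot\,,\,\cdot\,}\calF^{-1}[f]_{l_u,\,\cdot\,,\,\cdot\,}\calF[A]_{k_u-l_u,\,\cdot\,,\,\cdot\,,\,l_u,\,\cdot\,,\,\cdot\,}$.

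Next, I would repeat the argument in the $v$-direction. After the first step, the partially transformed object (now indexed by $k_u,j_v,j_w$ and parametrized by $l_u$) is a double sum over $j_v',j_v''$ of two copies of $\calF^{-1}[f]$ shifted by $j_v$ in the $v$-index, times a kernel whose $(j_v',j_v'')$ dependence is exactly that of Lemma~\ref{lemma_dft}. Applying the one-dimensional DFT in $j_v$ and invoking Lemma~\ref{lemma_dft} once more yields another factor of $M$ and a new summation index $l_v$. A third, identical pass in $j_w$ produces the last factor of $M$ and the last summation index $l_w$. The three factors of $M$ combine to the prefactor $M^3$ in (\ref{FI_formula}), and the three nested one-dimensional convolutions collapse into the single triple sum shown in the statement.

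The main obstacle is bookkeeping: one has to verify that after each partial DFT the resulting sequence still has the $f_{\cdot-j}f_{\cdot-j}A_{\cdot,\cdot}$ structure required by Lemma~\ref{lemma_dft} in the next direction. This is where the product form $\calF^{-1}[f]_{k_u-l_u}\calF^{-1}[f]_{l_u}\calF[A]_{k_u-l_u,l_u}$ is essential: both $\calF^{-1}[f]$ factors still depend on the remaining physical indices $(j_v,j_w)$ through a pure shift (inherited from the original shifts $j_v'-j_v$ and $j_v''-j_v$, etc.), and $\calF[A]$ retains its two-index dependence in the $v$- and $w$-pairs. Thus the hypotheses of Lemma~\ref{lemma_dft} continue to hold at each stage, and the induction on dimension closes cleanly, giving (\ref{FI_formula}).
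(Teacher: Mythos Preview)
Your proposal is correct and follows essentially the same route as the paper: three successive applications of Lemma~\ref{lemma_dft}, one per velocity dimension, with the observation that after each partial transform the remaining structure in the untransformed indices is again of the form required by the lemma. The paper carries out the $j_u$ and $j_v$ steps explicitly and then remarks that the $j_w$ step is identical, exactly as you outline; your discussion of the bookkeeping (that the two $\calF^{-1}[f]$ factors inherit pure shifts in the remaining indices while $\calF[A]$ keeps its two-index structure) matches the paper's justification for why the lemma applies at each stage.
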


\begin{proof}
We apply the one dimensional discrete Fourier transform along $j_u$ in equation (\ref{I_all_index}) and apply Lemma {\ref{lemma_dft}}:
\begin{align}
\label{lemma_once}
\calF[I_{j_v,j_w}]_{k_{u}}&= \sum_{j_v',j_w'=0}^{M-1} \sum_{j_v'',j_w''=0}^{M-1} \left(\sum_{j_u=0}^{M-1} \sum_{j_u'=0}^{M-1} \sum_{j_u''=0}^{M-1} W^{j_u k} f_{j_u'-j_u,j_v'-j_v,j_w'-j_w} f_{j_u''-j_u,j_v''-j_v,j_w''-j_w} A_{j'_{u},j'_{v},j'_{w},j''_u,j''_{v},j''_{w}}\right) \nonumber \\ 
&= M\sum_{j_v',j_w'=0}^{M-1} \sum_{j_v'',j_w''=0}^{M-1} \sum_{l_{u}=0}^{M-1} \calF^{-1}[f_{j_v'-j_v,j_w'-j_w}]_{k_{u}-l_{u}} \calF^{-1}[f_{j_v''-j_v,j_w''-j_w}]_{l_u} \calF[A_{j'_{v},j'_{w},j''_{v},j''_{w}}]_{k_{u}-l_{u},l_{u}} \nonumber \\ 
&= M\sum_{l_u=0}^{M-1} \sum_{j_w'=0}^{M-1} \sum_{j_w''=0}^{M-1} \nonumber \\
& \hspace{10mm} 
\left( \sum_{j_v'=0}^{M-1} \sum_{j_v''=0}^{M-1}   \calF^{-1}[f_{j_v'-j_v,j_w'-j_w}]_{k_{u}-l_{u}} \calF^{-1}[f_{j_v''-j_v,j_w''-j_w}]_{l_{u}} \calF[A_{j'_{v},j'_{w},j''_{v},j''_{w}}]_{k_{u}-l_{u},l_{u}} \right) .
\end{align}

We now focus on the terms inside the parentheses. We fix the indices $j_w$, $j_w'$, $j_w''$, $k_{u}$, and $l_{u}$ in the grouped terms. We drop these indices and write
\begin{align*}
\tilde{I}_{j_v} =  \sum_{j_v'=0}^{M-1} \sum_{j_v''=0}^{M-1} \tilde{f}_{j_v'-j_v} \tilde{f}_{j_v''-j_v} \tilde{A}_{j_v',j_v''}\, , 
\end{align*}
where 
\begin{equation*}
\tilde{f}_{j_v'-j_v} = \calF^{-1}[f_{j_v'-j_v}],\qquad 
\tilde{A}_{j_v',j_v''} =  \calF[A_{j'_{v},j''_{v}}].
\end{equation*}
We can see that this expression is identical to (\ref{I_onedim}). We take the discrete Fourier transform along the $j_v$ index of $\calF[\tilde{I}]_{k_v}$ and apply Lemma~\ref{lemma_dft} to arrive at
\begin{align}
\label{dft_dft}
\calF[\tilde{I}]_{k_v} &= M \sum_{l_{v}=0}^{M-1} \calF^{-1}[ \tilde{f}]_{k_{v}-l_{v}} \calF^{-1}[\tilde{f}]_{l_{v}} \calF[\tilde{A}]_{k_{v}-l_{v},l_{v}}\, .
\end{align}
We recall definitions of $\tilde{f}_{j_v'-j_v}$ and $\tilde{A}_{j_v',j_v''}$ and notice that the 
multi-index  Fourier transform results from applying the one-dimensional transform in each index. 
Bringing indices
$j_w'$, $j_w''$, $k_{u}$, and $l_{u}$ back, equation (\ref{dft_dft}) becomes
\begin{equation*}
\calF[I_{j_w}]_{k_u,k_v} = M^2 \sum_{l_u,l_v=0}^{M-1} \calF^{-1}[f_{j'_w-j_w}]_{k_u-l_u,k_v-l_v} \calF^{-1}[f_{j''_w-j_w}]_{l_u,l_v} \calF[A_{j'_w,j''_w}]_{k_u-l_u,k_w-l_w,l_u,l_w}\, .
\end{equation*}
Performing the discrete Fourier transform in the $j_w$ and repeating the argument once more we arrive at the statement of the theorem.
\end{proof}

\section{The Algorithm and its Complexity}
\label{secN4}

Theorem~\ref{thm3.1} allows us to calculate the collision operator 
(\ref{I+_dist1}) in $O(s^9 M^6)$ operations using the algorithm outlined below. 
We note that $\calF[A_{i,i',i''}]_{k_{u},k_{v},k_{w},l_{u},l_{w},l_{w}}$ 
can be precomputed and therefore does not factor into the algorithmic complexity analysis. 
\begin{enumerate}
	\item The first step of the algorithm is to evaluate $\calF^{-1}[f_i]_{k_u,k_v,k_w}$. Evaluation of the inverse Fourier transform requires $O(M^3 \log M)$ operations for each value of index $i$ by utilizing three-dimensional FFT. This must be repeated for each $i$, resulting in the total of $O(s^3 M^3 \log M)$ operations where $s^3$ is the number of velocity nodes in each velocity cell.
    \item Next we directly compute the convolution
	\begin{equation*}
		\calF [I_{i,i',i''}]_{k_{u},k_{v},k_{w}} = M^3 \sum_{l_{u},l_{v},l_{w}=0}^{M-1} \calF^{-1}[f_{i'}]_{k_{u}-l_{u},k_{v}-l_{v},k_{w}-l_{w}} \calF^{-1}[f_{i''}]_{l_{u},l_{v},l_{w}} \calF[A_{i,i',i''}]_{k_{u}-l_{u},k_{v}-l_{v},k_{w}-l_{w},l_{u},l_{w},l_{w}}\, 
	\end{equation*}
	using periodicity of both $\calF^{-1}[f_{i}]_{l_{u},l_{v},l_{w}}$ 
	and $\calF[A_{i,i',i''}]_{k_{u},k_{v},k_{w},l_{u},l_{w},l_{w}}$. 
	
    For fixed values of indices $i$, $i'$, $i''$ and $k_u$, $k_v$, $k_w$, calculating 
    $\calF [I_{i,i',i''}]_{k_{u},k_{v},k_{w}}$ requires $O(M^3)$ arithmetic operations. 
    There are $M^3$ combinations of $k_u,k_v,k_w$ and $s^9$ combinations of indices  $i$, $i'$, and $i''$, therefore complexity of this step is $O(s^9 M^6)$.

    \item Linearity of the Fourier transform allows us to sum $\calF[I_{i,i',i''}]_{k_u,k_v,k_w}$ along $i'$, $i''$ to calculate $\calF[I_i]_{k_u,k_v,k_w}$. 
    \begin{align*}
    \calF[I_{i}]_{k_{u},k_{v},k_{w}}
= \sum_{i',i''=1}^s \calF[I_{i,i',i''}]_{k_{u},k_{v},k_{w}}\, . 
    \end{align*}
    This step requires adding $s^6$ sequences of length $M^3$ for every value of $i$, resulting in a complexity of $O(s^9 M^3)$ operations. 
    \item We recover $\calF^{-1}[\calF[I_{i}]]_{j_u,j_v,j_w}=I_{i;j_u,j_v,j_w}$.
    This requires calculating the three-dimensional inverse DFT for every $i$ which gives a complexity of $O(s^3 M^3 \log M)$
\end{enumerate}

Overall, the algorithm has the numerical complexity of $O(s^9 M^6)$ dominated by step 2. 
We note that $s=s_{u}=s_{v}=s_{w}$ can be kept fixed and the number of cells 
$M^3$ in velocity domain can be increased if more accuracy is desired. In this case, the main contribution to 
complexity growth comes from $M$, the number of velocity cells in one velocity dimension. Thus 
we can consider the algorithm to be of 
complexity $O(M^6)$. In our simulations $s_u,s_v,s_w \leq 3$, however moderately higher values 
may be used too. Results of this analysis are validated within the next section.

\section{Numerical Results}
\label{secN5}

In this section we will describe results of numerical experiments for computing 
the collision operator using nodal-DG velocity discretizations and evaluation of 
convolution using the discrete Fourier 
transform. 

Our first discussion is concerned with estimating numerical complexity 
of the method. The analysis of the previous section suggests that the number of 
arithmetic operations to evaluate the collision operator using the Fourier transform is 
$O(M^6)$, where $M$ is the number of the velocity cells in one velocity dimension. The direct 
evaluation of convolution employed in  \cite{AlekseenkoJosyula2012a} requires $O(M^8)$ operations. 
In Table~{\ref{tab01}}, CPU times are listed for evaluating the collision operator at 
one spatial point both using the Fourier transform and directly. The computations were 
performed on an Intel Core i7-3770 3.4 GHz processor. The numbers of cells 
in velocity domain were varied from 9 to 27 in each velocity dimension. The computational 
complexity is modeled 
using the formula $t=O(M^\alpha)$, where $\alpha$ is constant. The observed values of the 
exponent $\alpha$ are computed using the formula $\alpha=\ln(M_{1}/M_{2})/\ln(t_{1}/t_{2})$. 

We note that in the case of the Fourier evaluation, the observed orders are significantly 
higher than the projected value of 6. Still, the orders are significantly lower than the 
orders of the direct evaluation. Deviations from the theoretical estimate of 
$\alpha=6$ may be due to the costs of the memory transfer operations and due to the choice of 
the specific fast Fourier transform that is automatically selected by the KML library based 
on the value of $M$. Overall, the new approach showed a dramatic improvement in speed as 
compared to the direct evaluation of the collision operator used in \cite{AlekseenkoJosyula2012}.
The acceleration is expected to be even larger for higher values of $M$.

\begin{table}[h]
  \begin{tabular}[c]{ c c c c c c}
    \hline 
    & \multicolumn{2}{c|}{DFT} & \multicolumn{2}{c|}{Direct} & Speedup\\
    \hline
    $M$ & time, s & $\alpha$ & time, s & $\alpha$ &  \\
    \hline
    9   & 1.47E-02 &         & 1.25E-01 &       & 8.5  \\
    15  & 3.94E-01 &  6.43   & 4.91E+00 &  7.18 & 12.5 \\
    21  & 3.09E+00 &  6.14   & 7.80E+01 &  8.21 & 25.2 \\
    27  & 1.64E+01 &  6.65   & 6.05E+02 &  8.15 & 36.7 \\
    \hline
  \end{tabular}
\caption{\label{tab01} CPU times for evaluating the collision operator directly and using the Fourier transform.}
\end{table}

\subsection{The gain-loss vs.\ the non-split forms of the collision operator} 
The form of the collision integral (\ref{eq3.2.1}),  (\ref{eq3.2.3}) admits a few 
re-formulations that are worth considering for the purpose of numerical implementation. 
It was observed that the ability of numerical solutions to conserve mass, momentum, 
and energy is strongly affected by the form of the discrete collision integral. 
The first such reformulation consists of representing the numerical solution as
\begin{equation}
\label{dec00}
f(t,\vec{x},\vec{v})=f_{M}(t,\vec{x},\vec{v})+\Delta f(t,\vec{x},\vec{v})\, ,
\end{equation}
where $f_{M}(t,\vec{x},\vec{v})$ is the Maxwellian distribution that at every point $(t,\vec{x})$ has the same density, bulk 
velocity, and temperature as $f(t,\vec{x},\vec{v})$. Also known as the macro-micro decomposition 
(see, e.g., \cite{Filbet2015309}), this representation of the solution was applied in 
\cite{AlekseenkoJosyula2012} to improve conservation properties of the scheme 
when the solution is near continuum. The decomposition (\ref{dec00}) is also used in the 
numerical simulations presented in this paper. For convenience, let us briefly summarise the 
idea of the decomposition. We substitute 
(\ref{dec00}) into (\ref{eq3.2.1}) to obtain an alternative representation of the collision integral:
\begin{align}
\label{dec01}
I_{\phi_{i;j}}&= \int_{\R^3}\int_{\R^3} f(t,\vec{x},\vec{v}) f(t,\vec{x},\vec{v}_{1})
 A(\vec{v},\vec{v}_{1};\phi_{i;j})   d\vec{v}_{1}\, d\vec{v}\, \nonumber \\
 &=\int_{\R^3}\int_{\R^3} [f_{M}(t,\vec{x},\vec{v}) \Delta f(t,\vec{x},\vec{v}_{1})
 +\Delta f(t,\vec{x},\vec{v}) f_{M}(t,\vec{x},\vec{v}_{1})\nonumber \\
 &\hspace*{10mm}{}+ \Delta f(t,\vec{x},\vec{v}) \Delta f(t,\vec{x},\vec{v}_{1})]
 A(\vec{v},\vec{v}_{1};\phi_{i;j})   d\vec{v}_{1}\, d\vec{v}\, . 
\end{align}
Here we used the fact that the collision integral is zero for any 
Maxwellian. All formulas discussed in previous sections and also 
in the following can be easily adjusted to the decomposed form of the solution.

The second possible re-formulation consists of splitting the operator 
$A(\vec{v},\vec{v}_{1};\phi_{i;j})$ given by (\ref{eq3.2.3}) into the loss and gain terms, i.e.,
\begin{align}
\label{dec02}
A(\vec{v},\vec{v}_{1};\phi_{i;j})= 
 \int_{\Sbb^2} \phi_{i;j}(\vec{v}') b_{\alpha}(\theta) |g|^\alpha \, d\sigma - \phi_{i;j}(\vec{v}) \sigma_{T} |g|^{\alpha}\, ,
\end{align}
where  $\sigma_{T}=\int_{\Sbb^2} b_{\alpha} (\theta) d \sigma$. By separating the 
integrals in the velocity variable in (\ref{eq3.2.1}) and performing a substitution 
in the second term, we obtain the split formulation of the collision integral:
\begin{align}
\label{dec03}
I_{\phi_{i;j}}& = \int_{\R^3}\int_{\R^3} f(t,\vec{x},\vec{v}) f(t,\vec{x},\vec{v}_{1})
A^{+}(\vec{v},\vec{v}_{1};\phi_{i;j}) - \int_{\R^{3}} f(t,\vec{x},\vec{v})\phi_{i;j}(\vec{v})\nu(t,\vec{x},\vec{v}) \, d\vec{v},
\end{align} 
where the collision frequency $\nu(t,\vec{x},\vec{v})$ \cite{Struchtrup2005} and the kernel $A^{+}(\vec{v},\vec{v}_{1};\phi_{i;j})$ are 
defined by
\begin{align*}
\nu(t,\vec{x},\vec{v})&= \int_{\R^3} f(t,\vec{x},\vec{v}_{1}) \sigma_{T}|g|^{\alpha} \, d \vec{v}_{1}\quad\mbox{and}\quad
A^{+}(\vec{v},\vec{v}_{1};\phi_{i;j})= |g|^\alpha \int_{\Sbb^2} \phi_{i;j}(\vec{v}') b_{\alpha}(\theta) \, d\sigma\, .
\end{align*}
We note that formulation (\ref{dec03}) has properties that are beneficial in a theoretical study. In particular, $A^{+}(\vec{v},\vec{v}_{1};\phi_{i;j})$ is decreasing at infinity, while $A(\vec{v},\vec{v}_{1};\phi_{i;j})$ is increasing linearly in the direction of 
$\vec{v}-\vec{v}_{1}$ for at least some points $\vec{v}$. Splitting of the 
collision operator into the gain and loss terms was used by many authors for both 
theoretical and numerical studies. 

It was observed, however, that the split form of the collision integral had 
some numerical properties that make it inferior to the non-split form. In 
Figure~\ref{fig01} results of the evaluation of the collision integral 
at a single spatial point are presented for both split and non-split 
formulations. The value of the solution $f(t,\vec{v})$ in these computations 
is given by the sum of two Maxwellian distributions with dimensionless 
densities, bulk velocities, and temperatures given as follows: $n_{1}=1.6094$, 
$n_{2}=2.8628$, $\bar{\vec{u}}_{1}=(0.7750,0,0)$, $\bar{\vec{u}}_{2}=(0.4357,0,0)$,
$T_{1}=0.3$, and $T_{2}=0.464$. These values correspond to upstream and downstream conditions
of a normal shock wave with the Mach number 1.55. Discretization of the solution was done using 
27 velocity cells in each dimension and one velocity node per cell. The collision operator was 
evaluated using both the split and non-split forms and using both direct evaluation and evaluation 
using the Fourier transform. Results of the direct evaluation of the 
split form of the collision operator are shown in plots (b) and (e). Notably, values 
of the collision operator are zero at the 
boundary of the domain, which is what one would expect from the collision process. Results 
of evaluation of the split form of the collision operator using the Fourier transform are shown in plots (a) and (b). 
Significant non-zero values can be observed at the corners of the domain. This is likely to be 
a manifestation of aliasing. Results of evaluating the non-split form of the collision 
operator using the Fourier transform are shown in plots (c) and (f). One can notice that 
in the case of the non-split form, aliasing is not visible. In fact, the 
$L^{1}$-norm of the difference between the direct and Fourier
evaluations of the non-split collision operator in this case was 2.9E-4 and 
the $L^{\infty}$-norm was 
1.1E-4. We note that the diameters of the supports of the collision kernels 
are comparable in both split and non-split cases.  We also note that aliasing errors 
can be reduced by padding the solution and the collision kernel with zeros. However, 
this will also increase memory and time costs of calculations. The non-split form 
of the collision operator has significantly smaller aliasing errors and does not 
require zero padding. Therefore, it is more efficient. 

\begin{center}
\begin{figure}[h]
\centering
  \begin{tabular}{@{}ccc@{}}
  \includegraphics[height=.18\textheight]{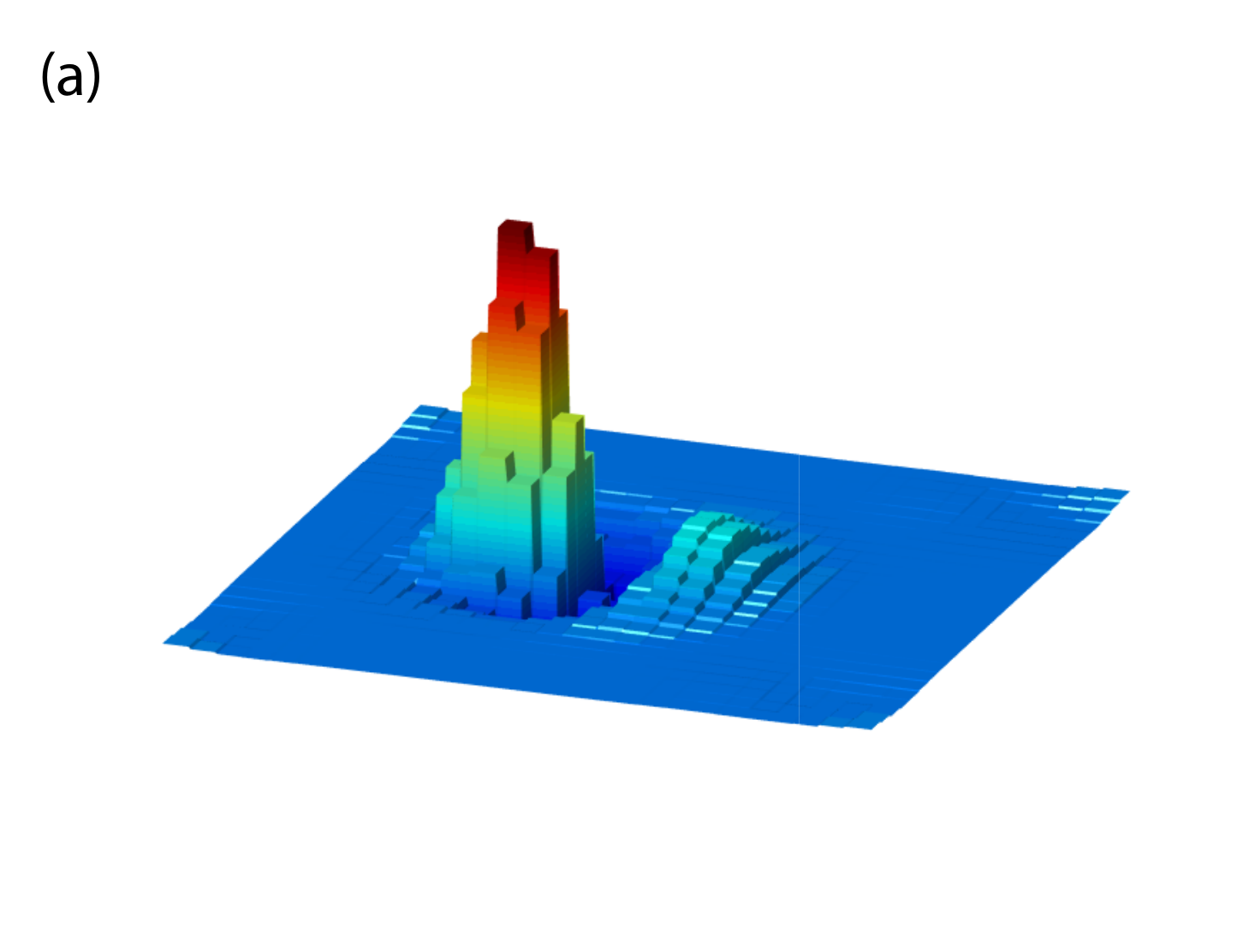}&
  \includegraphics[height=.18\textheight]{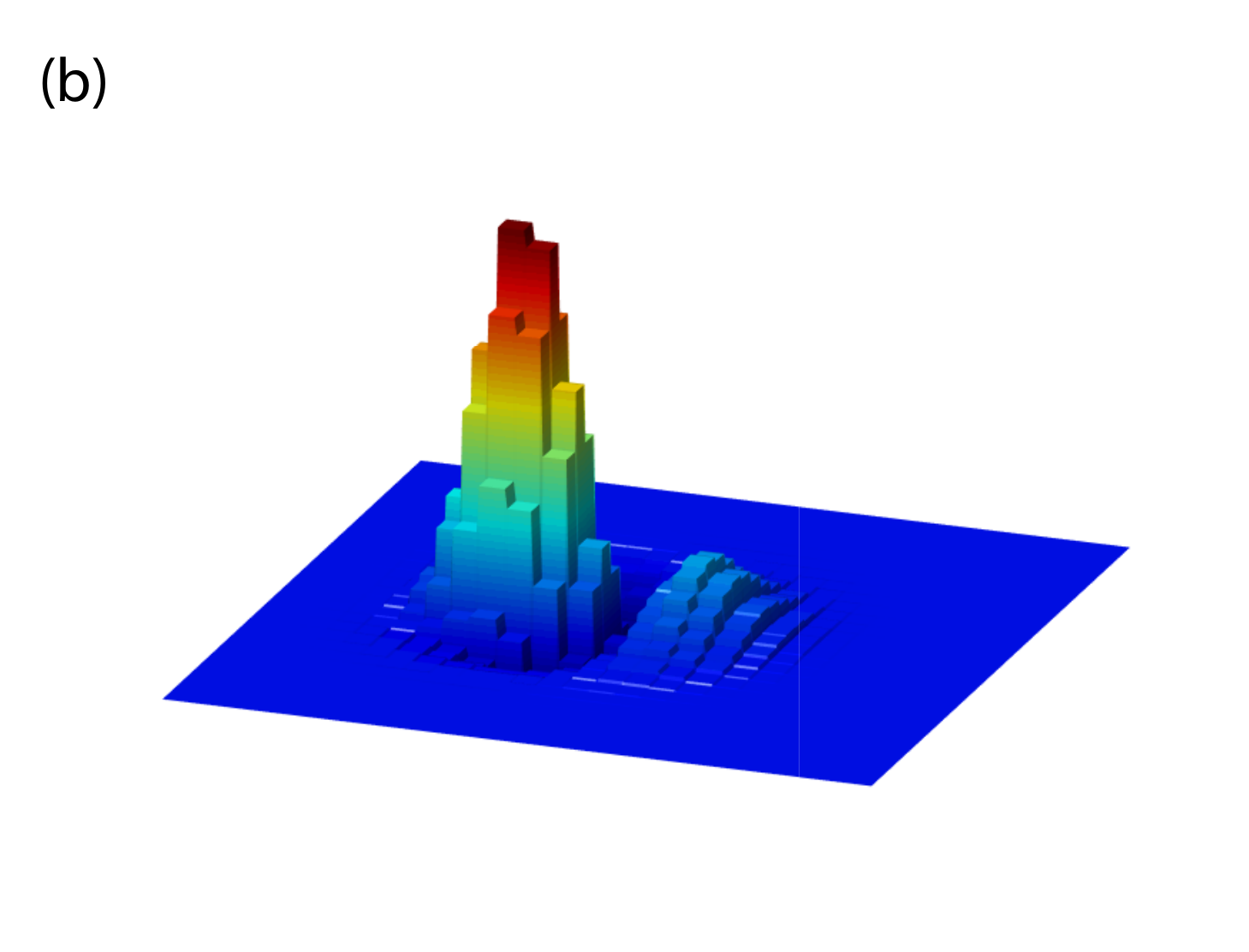}&
  \includegraphics[height=.18\textheight]{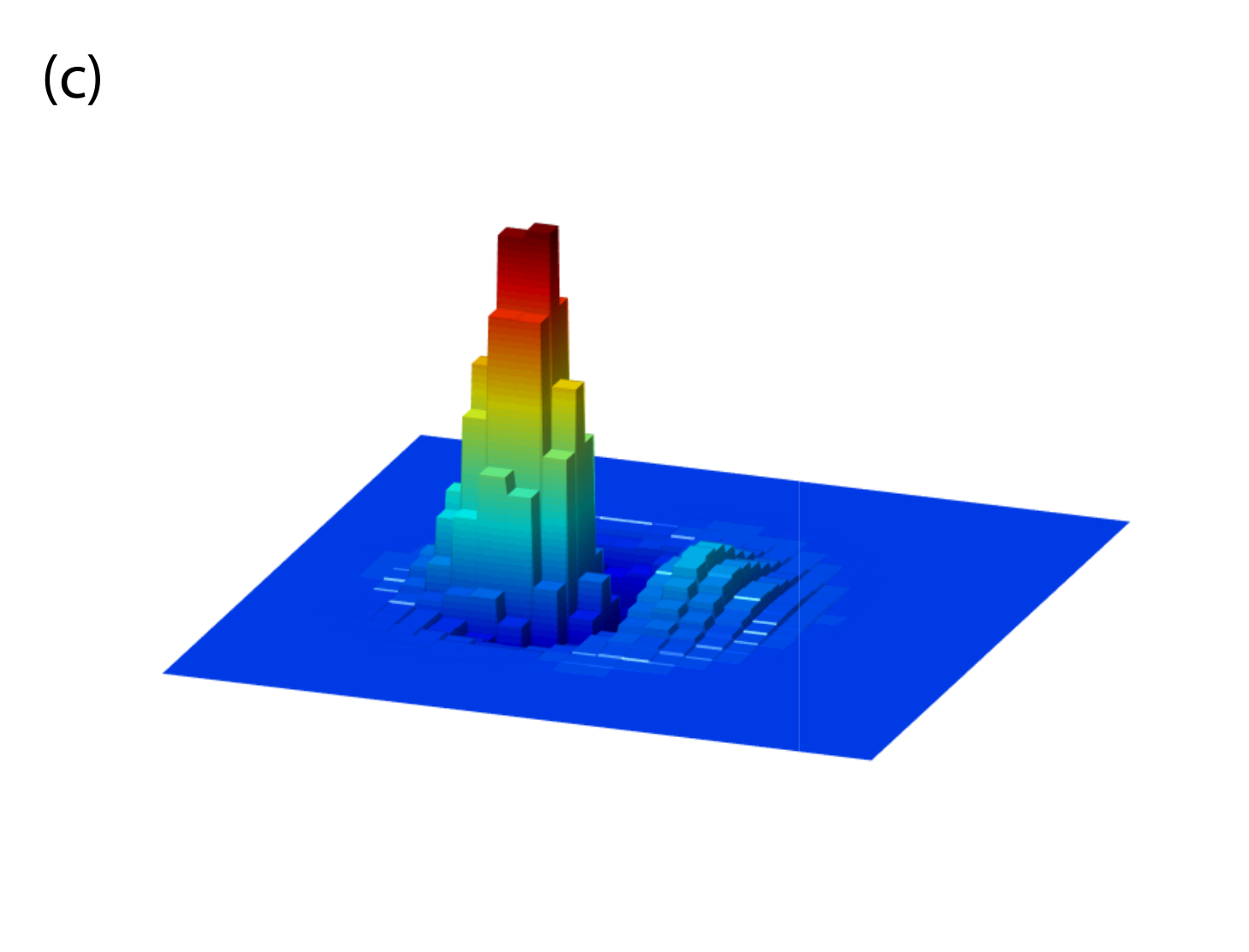}\\
  \includegraphics[height=.18\textheight]{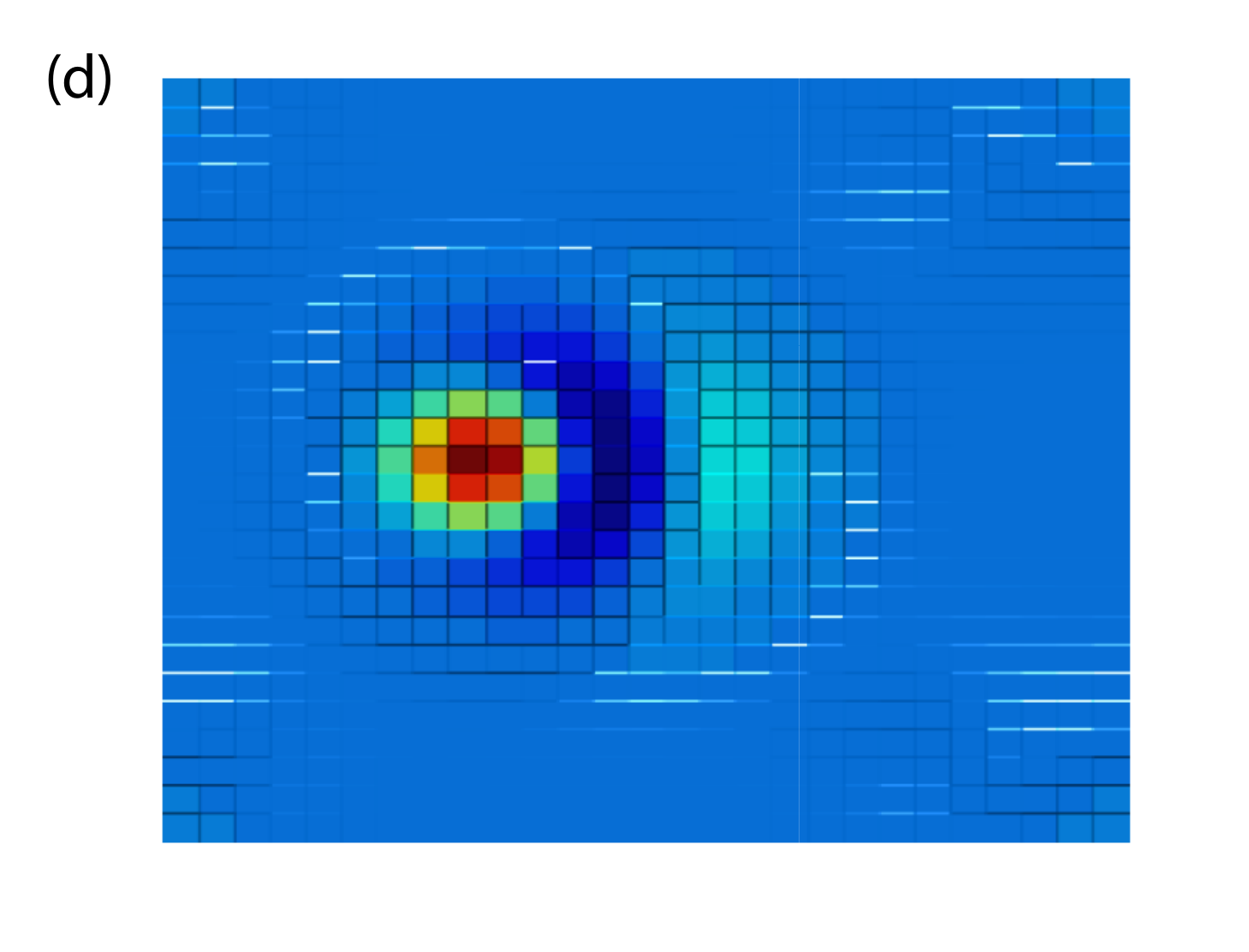}&
  \includegraphics[height=.18\textheight]{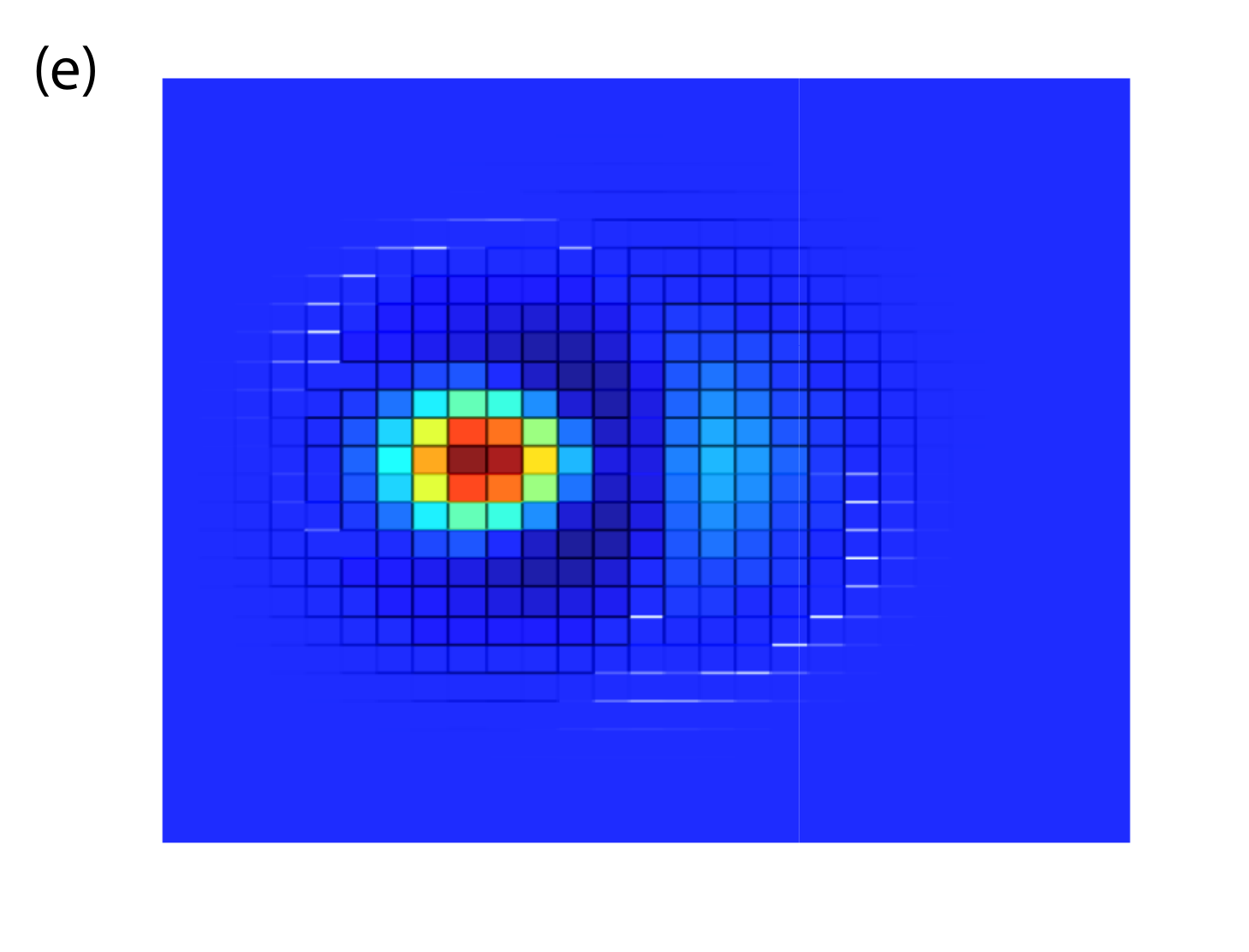}&
  \includegraphics[height=.18\textheight]{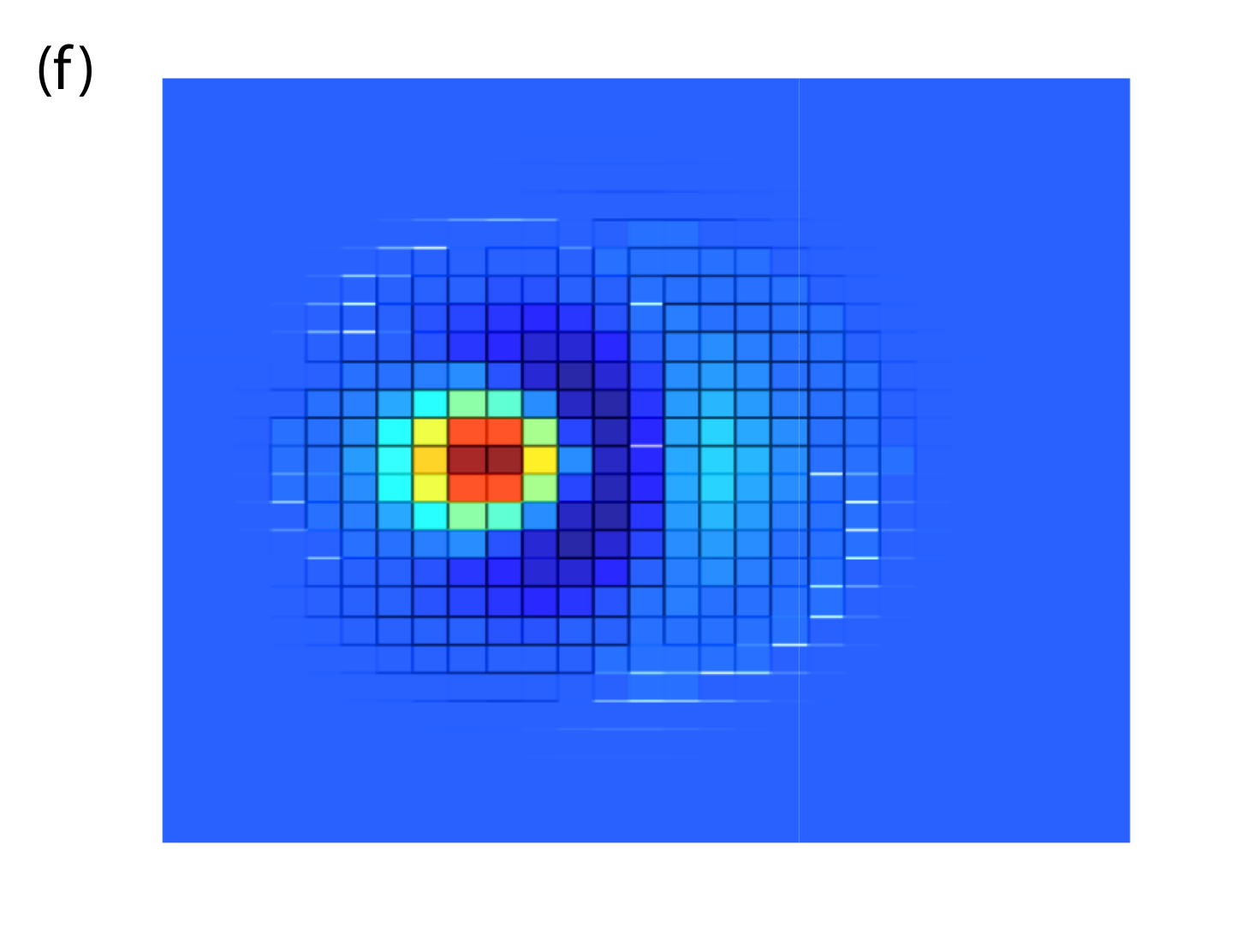}\\
\end{tabular}
\caption{\label{fig01} Evaluation of the collision operator using split and non-split forms: (a) and (d) the split form evaluated using the Fourier transform; (b) and (e) the split form evaluated directly; (c) and (f) the non-split form evaluated using the Fourier transform.}
\end{figure}
\end{center}

Another important issue that makes the non-split formulation more attractive is concerned 
with conservation of mass, momentum, and energy in the discrete solutions. It is the property of 
the exact Boltzmann collision operator that its mass, momentum, and temperature moments are zero. 
Generally, the conservation laws are satisfied only approximately when the Boltzmann equation is 
discretized. Many numerical approaches include mechanisms dedicated to enforcement of  
the conservation laws in discrete solutions in order to guarantee a physically meaningful result.

 \begin{table}[h]
  \begin{tabular}[c]{ c c c c c | c c c c }
  \hline
  \multicolumn{5}{c|}{Error in Conservation of Mass} &
  \multicolumn{4}{c}{Error in Conservation of Temperature}\\
    \hline 
    & \multicolumn{2}{c|}{Split} & \multicolumn{2}{c|}{Non-split} &
    \multicolumn{2}{c|}{Split} & \multicolumn{2}{c}{Non-split} \\
    \hline
    $n$ & Fourier& Direct & Fourier & Direct 
        & Fourier& Direct & Fourier & Direct \\
    \hline
    9   & 0.37 & 1.26 & 1.71E-5 & 1.92E-5 &
          3.51 & 1.69 & 1.71E-2 & 1.84E-2  \\
    15  & 0.10 & 1.20 & 1.45E-5 & 1.71E-5 & 
          0.29 & 1.25 & 1.64E-3 & 3.15E-3 \\
    21  & 0.18 & 1.18 & 0.67E-5 & 0.93E-5 & 
          1.38 & 1.24 & 5.61E-5 & 1.75E-3 \\
    27  & 0.18 & 1.18 & 0.61E-5 & 0.86E-5 & 
          1.37 & 1.24 & 5.40E-4 & 1.05E-3 \\
    \hline
  \end{tabular}
\caption{\label{tab02} Absolute errors in conservation of mass and temperature in the discrete collision integral computed using split and non-split formulations.}
\end{table}

It was observed that if no measures are introduced to enforce the conservation laws,  
solutions to the problem of spatially homogeneous relaxation obtained using the 
split formulation of the collision integral exhibit large, on the order of 5\%\ errors in 
temperature. The mass and momentum are also poorly conserved in this case. 
At the same time, solutions obtained using the non-split formulation had their mass, momentum,
and temperature accurate to three or more digits. To further explore this phenomena, 
we evaluated the collision operator in both split and non-split forms and computed its mass, 
momentum, and temperature moments. The solution was taken to be the sum of two Maxwellians 
in the example above. The numbers of velocity cells were 
varied from 9 to 27. In both split and non-split 
formulations of the collision integral, the decomposed form (\ref{dec01}) of the solution was 
used. For both forms, evaluation of the collision operator was done directly and using the Fourier transform. 
The results are summarized in Table~\ref{tab02}. It can be seen that errors in the mass and 
temperature in the non-split formulation are several orders of magnitude smaller than in 
the split formulation. The errors are also larger in the case of direct evaluation. 
A possible explanation to this is the combined effect of finite precision 
arithmetic and truncation errors in integration that lead to catastrophic cancellation 
when gain and loss terms are combined. 
We note that in 
both split and non-split forms, fulfilment of 
conservation laws requires exact cancellation of the respective integration 
sums. 
When the gain and
loss terms are computed separately using numerical quadratures, the 
relative truncation errors are expected to be acceptable for each of the terms. 
This may change, however, when the terms are combined. It is 
conceivable that significant digits cancel in the two terms and the 
truncation errors are promoted into significance, manifesting in strong 
violations of conservation laws. At the same time, increasing 
the number of velocity cells may not remedy the problem 
due to the expected accumulation of roundoff 
errors. Indeed, evaluation of the gain term in (\ref{dec03}) requires 
$O(M^8)$ arithmetic operations. 
It is possible that combination of large and small values in the finite 
precision arithmetic results in loss of low order digits and a significant 
accumulation of roundoff. When the gain and loss terms are combined, 
this, again, will lead to loss of significance and to perturbations of 
conservation laws. In the case when both the  
non-split form and the decomposition (\ref{dec01}) are used, 
much of the cancellation is happening 
on the level of the integrand. 
We hypothesize here 
that the resulting values of the integrand are smaller and vary less 
in scale. As a result, the accumulated absolute truncation and 
roundoff errors are also smaller, which gives better accuracy in 
conservation laws.

Because of the poor conservation properties and because of the 
susceptibility to aliasing errors we do not recommend the 
split form (\ref{dec03}) for numerical implementation. 

\subsection{Simulations of the spatially homogeneous relaxation}

In this section we present results of solution of the problem of 
spatially homogeneous relaxation using Fourier evaluation of the 
collision operator. Two cases of initial data were considered. In 
both cases, the initial data is a sum of two Maxwellian densities. 
In the first case, the 
dimensionless densities, bulk velocities, and temperatures of the 
Maxwellians are $n_1=1.0007$, $n_2=2.9992$, 
$\bar{\vec{u}}_{1}=(1.2247,0,0)$, $\bar{\vec{u}}_{2}=(0.4082,0,0)$,
$T_{1}=0.2$, $T_{2}=0.7333$. These parameters correspond to upstream 
and downstream conditions of the Mach 3 normal shock wave.  
In the second case, we use the parameters of the example of the previous  
section: $n_{1}=1.6094$, $n_{2}=2.8628$, $\bar{\vec{u}}_{1}=(0.7750,0,0)$, $\bar{\vec{u}}_{2}=(0.4357,0,0)$, $T_{1}=0.3$, and $T_{2}=0.464$. These 
parameters correspond to upstream and downstream conditions of a Mach 1.55 
shock wave. 

In Figures~{\ref{fig02}} and {\ref{fig03}}, relaxation of moments in the 
Mach~3.0 and Mach~1.55 solutions are presented. In the case of Mach~3.0, 
$M=33$ velocity cells were used in each velocity dimension with one 
velocity node on each cell, $s=1$. In the case of Mach~1.55, $M=15$ and $s=1$ 
were used. In the computed solutions, the collision operator was evaluated 
both using the Fourier transform and directly. In the Mach 3.0 instance, the directional 
temperature moments were compared to the moments obtained from a DSMC solution
\cite{Boyd1991411}. 

\begin{center}
\begin{figure}[h]
\centering
  \begin{tabular}{@{}cc@{}}
  \includegraphics[height=.218\textheight]{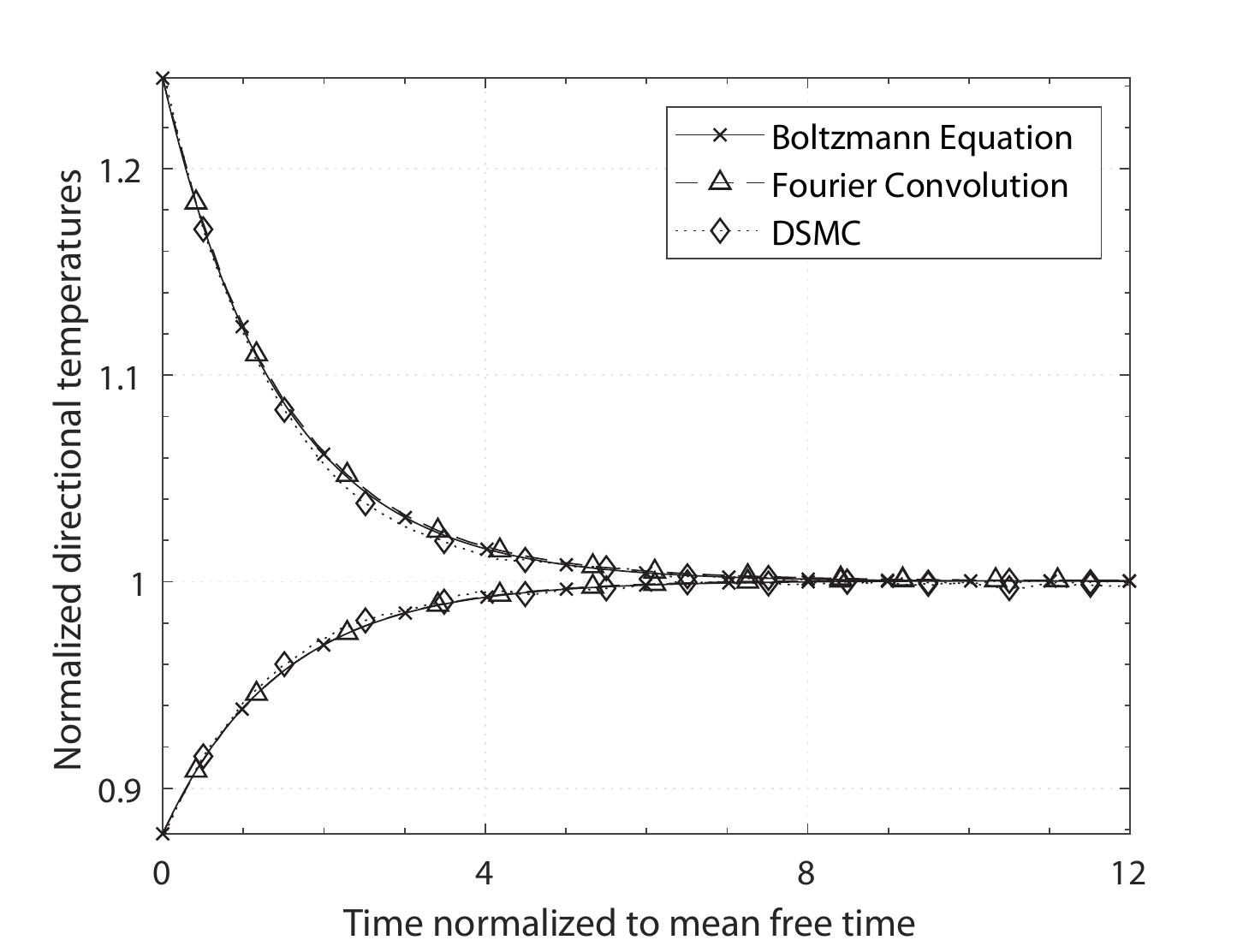}&
  \includegraphics[height=.218\textheight]{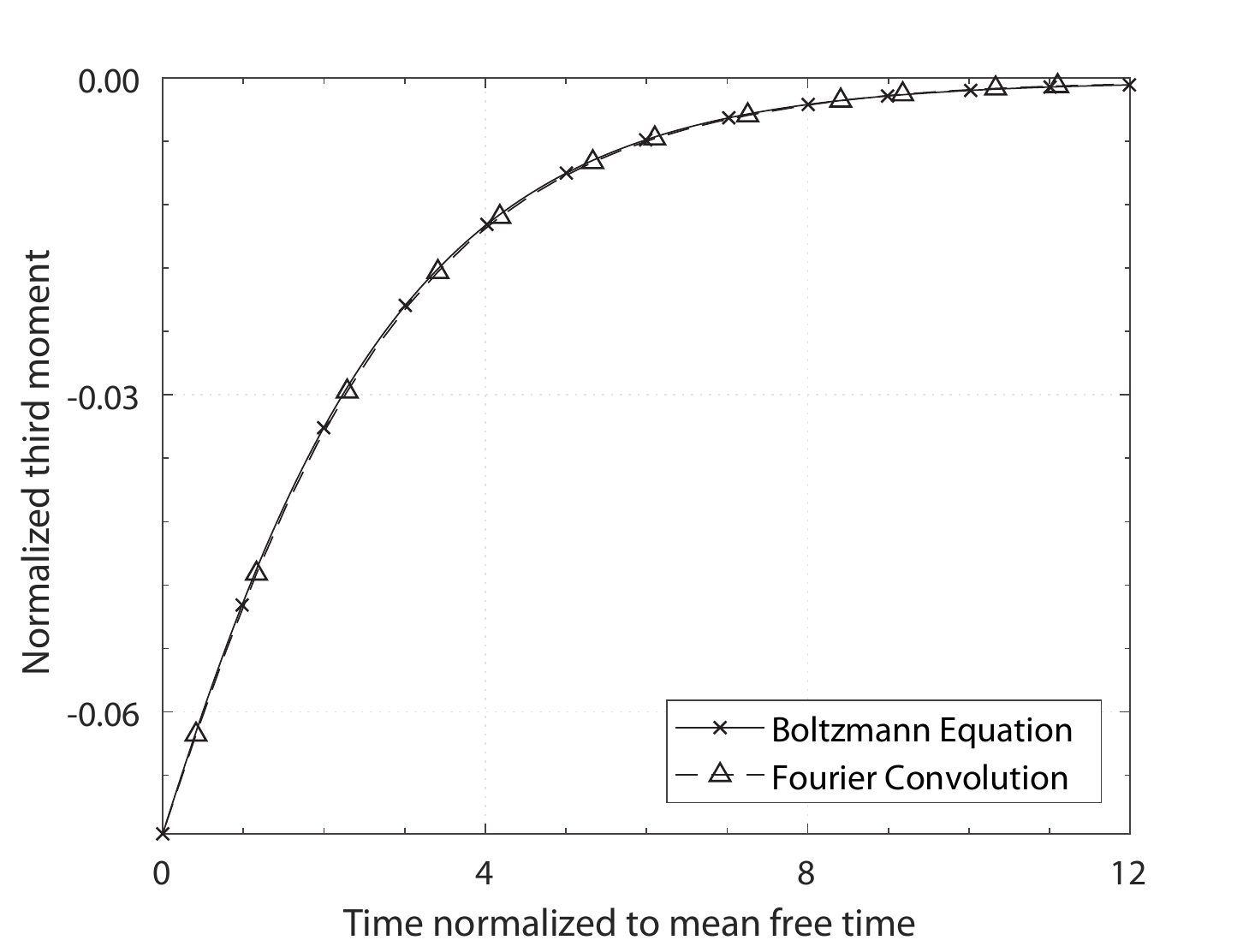}\\      
  \includegraphics[height=.218\textheight]{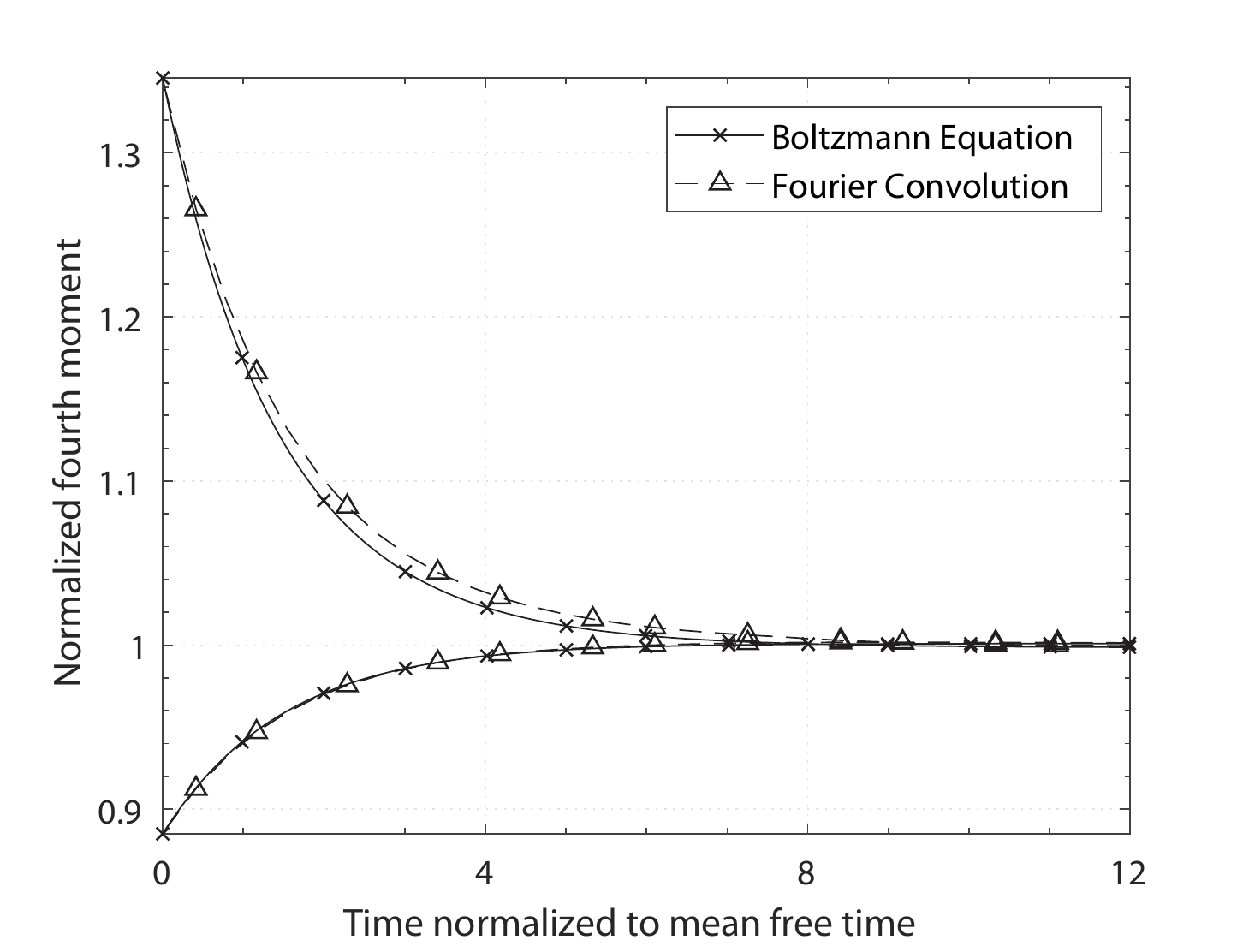}&
  \includegraphics[height=.218\textheight]{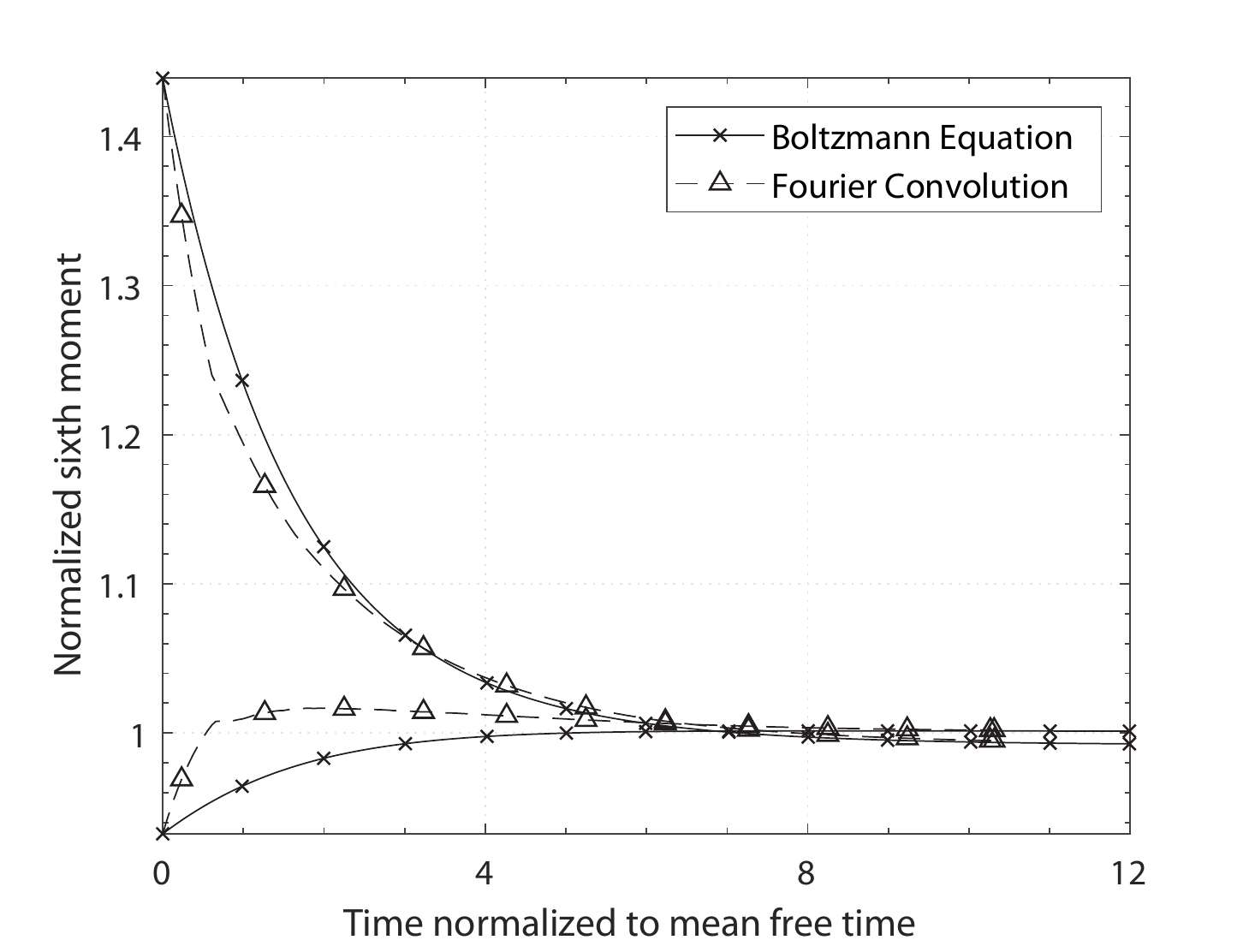}\\
  \end{tabular}
\caption{\label{fig02} Relaxation of moments $f_{\varphi_{i,p}} = 
\int_{R^3} (u_{i}-\bar{u}_{i})^p f(t,\vec{u})\, du$, $i=1,2$, $p=2,3,4,6$ 
in a mix of Maxwellian streams corresponding to a shock wave with 
Mach number 3.0 obtained by solving the Boltzmann equation using Fourier and direct evaluations of the collision integral. In the case of $p=2$, the relaxation of moments 
is also compared to moments of a DSMC solution \cite{Boyd1991411}. }
\end{figure}
\end{center}

\begin{center}
\begin{figure}[h]
\centering
  \begin{tabular}{@{}cc@{}}
  \includegraphics[height=.218\textheight]{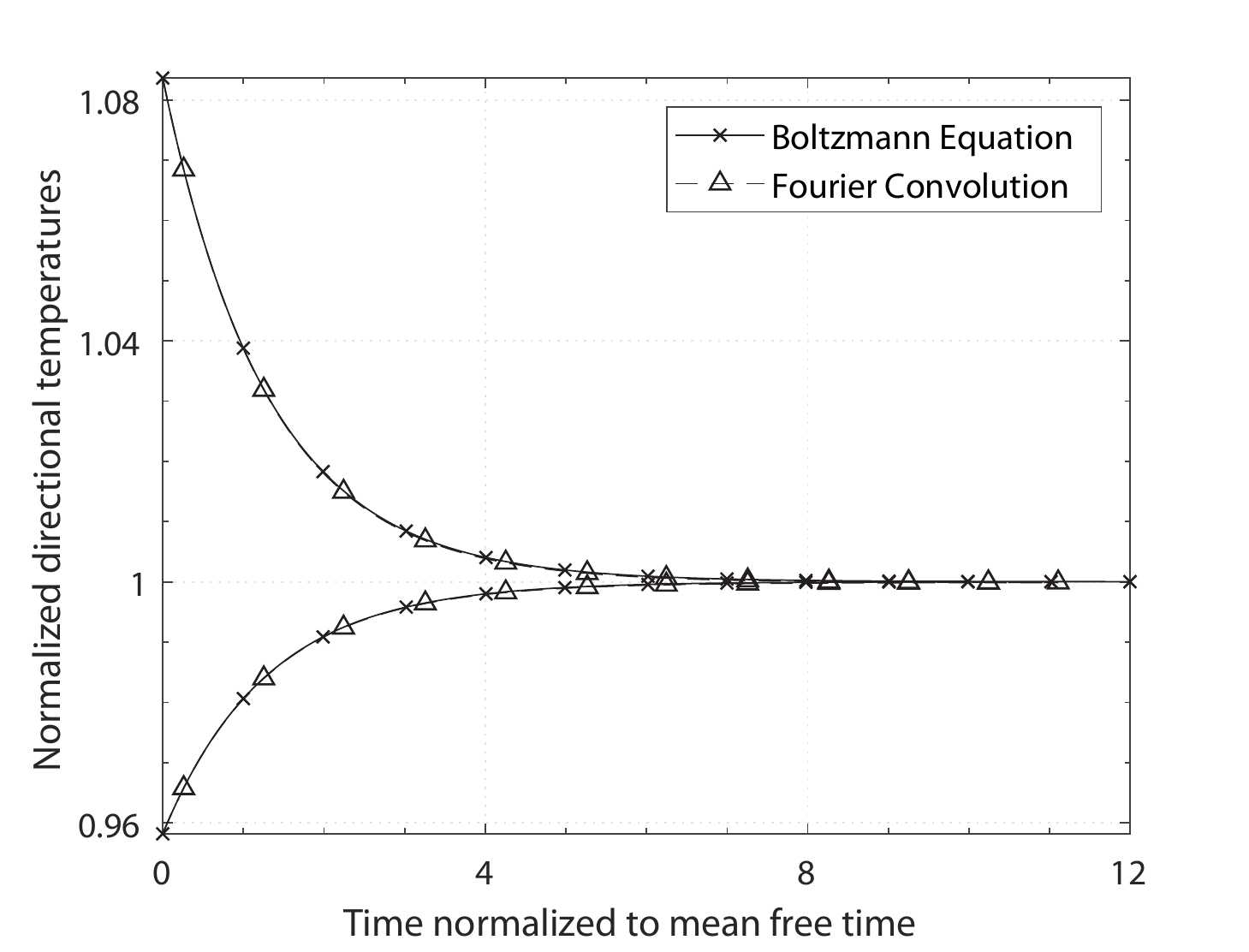}&
  \includegraphics[height=.218\textheight]{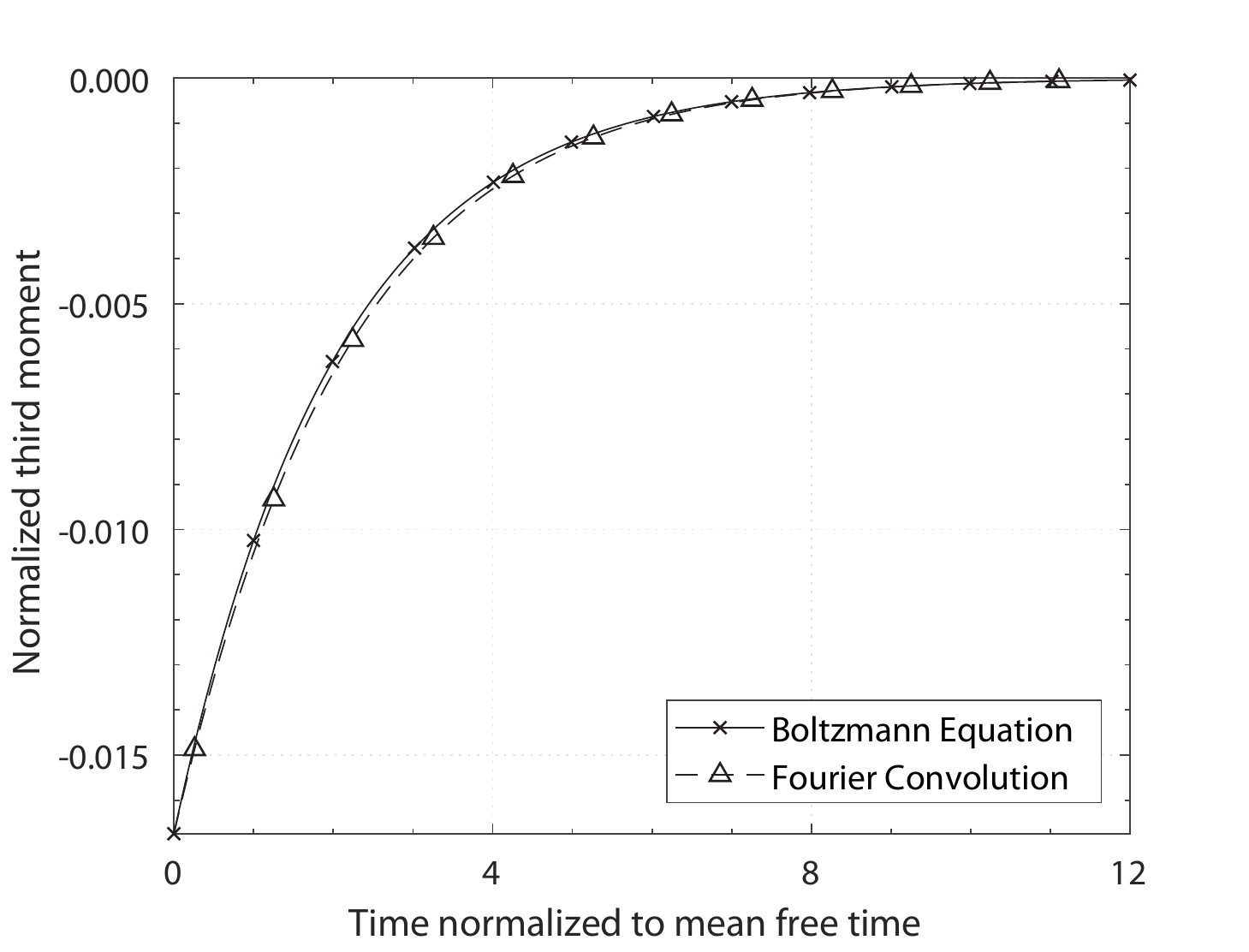}\\      
  \includegraphics[height=.218\textheight]{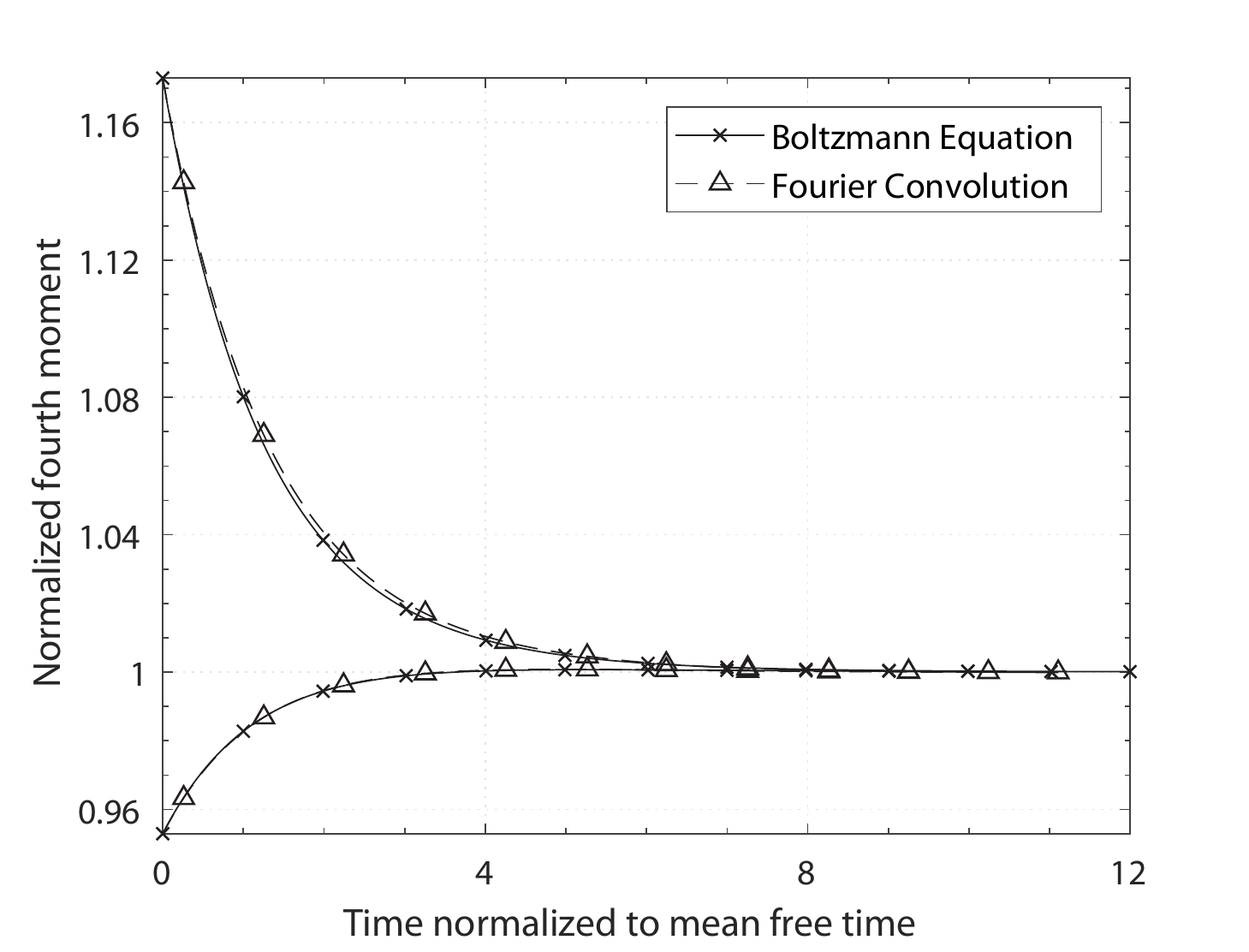}&
  \includegraphics[height=.218\textheight]{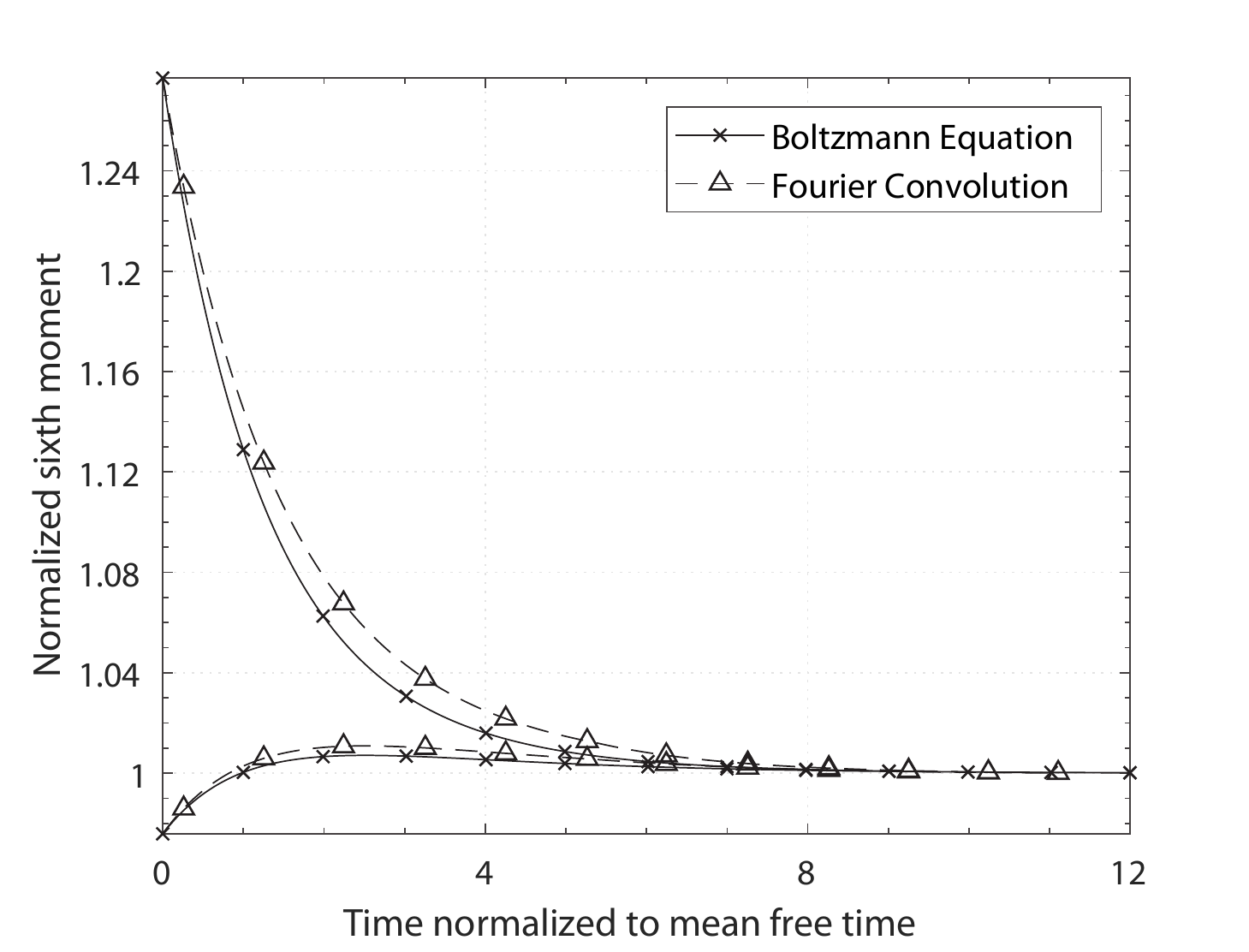}\\
  \end{tabular}
\caption{\label{fig03} Relaxation of moments $f_{\varphi_{i,p}}$, $i=1,2$, $p=2,3,4,6$ 
in a mix of Maxwellian streams corresponding to a shock wave with 
Mach number 1.55 obtained by solving the Boltzmann equation using Fourier and direct evaluations of the collision integral.}
\end{figure}
\end{center}

It can be seen that the solutions obtained by the Fourier evaluation of the collision 
integral are close to those computed by the direct evaluation. The low order moments 
are in excellent agreement for both presented solutions. However, there are 
differences in the higher moments. It appears that the differences are caused 
by a small amount of the aliasing error in the solutions. This can be reduced 
by padding the solution and the kernel with zeros at the expense of higher numerical 
costs, both in time and memory. Overall, however, the $O(M^6)$ evaluation of 
the collision operator using the Fourier transform appears to be consistent 
and stable. 

\section{Conclusion}
We developed and tested an approach for evaluating the Boltzmann collision operator in 
$O(M^6)$ operations where $M$ is the number of velocity cells in one velocity 
dimension. At the basis of the method is the convolution form of the nodal-DG 
discretization of the collision operator \cite{AlekseenkoNguyenWood2015}. The 
algorithm uses the discrete Fourier transform to evaluate convolution fast. 
The method is formulated for uniform grids and for arbitrary order the 
nodal-DG approximation. However, to achieve the $O(M^6)$ complexity, 
it is assumed that the degree of the local DG polynomial basis is kept constant 
and only the numbers of velocity cells are changing. 

The results for the new approach suggest that a potential problem with 
the method could be aliasing errors that perturb higher moments. Aliasing 
errors are results of the assumption of periodicity of the solution 
and the collision kernel. The problem can be remedied by padding solution with zeroes 
as the expense of higher computational time and memory costs which may 
be large. It was observed that the non-split form of the collision operator, 
i.e., when the gain and loss terms are not separated, 
is strongly preferable over the split form for 
the purpose of maintaining conservation laws. Simulations of the problem of 
spatially homogeneous relaxation confirm that the Fourier evaluation of the 
collision operator in this case is accurate and stable for hundreds 
of mean free times. 

While this was not the subject of the present paper, the new method allows for 
scalable MPI parallelization and therefore can potentially be used in multi-dimensional 
problems. Also, generalizations of the analytical convolution form to octree partitions 
of the velocity domain are straightforward. However, difficulties arise when one tries 
to extend fast algorithms for evaluating discrete convolution to octrees. These issues 
will be the authors' future work.

\section*{Acknowledgement}
The authors were supported by the NSF DMS-1620497 grant. 
Computer resources were provided by the Extreme Science 
and Engineering Discovery Environment, 
supported by National Science Foundation Grant No.~OCI-1053575. 
The authors thank professors I.~Gamba and L.~Pareschi for pointing 
out the importance of convolution form in kinetic theory and for inspiring this work. 
The authors thank professors I.~Wood, J.~Shen, and E.~Josyula, and Drs.\ S.~Gimelshein 
and R.~Martin for interest in this work and for fruitful discussions. The authors thank 
R.~Cholvin, B.~Sripimonwan, and E.~Mujica for their interest and help in exploring 
methods for fast evaluation of convolution.

\bibliographystyle{plain}
\bibliography{ffb10092017}

\end{document}